\theoremstyle{plain}
\newtheorem{thm}{Theorem}[section]
\newtheorem{prop}[thm]{Proposition}
\newtheorem{lmm}[thm]{Lemma}
\theoremstyle{definition}
\DeclareMathOperator{\an}{an}
\DeclareMathOperator{\hyb}{hyb}
\DeclareMathOperator{\Pic}{Pic}
\DeclareMathOperator{\NA}{NA}
\DeclareMathOperator{\MA}{MA}
\DeclareMathOperator{\GL}{GL}
\DeclareMathOperator{\Frac}{Frac}
\DeclareMathOperator{\Spec}{Spec}
\DeclareMathOperator{\diag}{diag}
\DeclareMathOperator{\ord}{ord}
\DeclareMathOperator{\NS}{NS}
\newcommand{\bA}{\mathbb{A}}
\newcommand{\bC}{\mathbb{C}}
\newcommand{\bD}{\mathbb{D}}
\newcommand{\bG}{\mathbb{G}}
\newcommand{\bN}{\mathbb{N}}
\newcommand{\bP}{\mathbb{P}}
\newcommand{\bQ}{\mathbb{Q}}
\newcommand{\bR}{\mathbb{R}}
\newcommand{\bZ}{\mathbb{Z}}
\newcommand{\cC}{\mathcal{C}}
\newcommand{\cD}{\mathcal{D}}
\newcommand{\cF}{\mathcal{F}}
\newcommand{\cL}{\mathcal{L}}
\newcommand{\cO}{\mathcal{O}}
\newcommand{\cU}{\mathcal{U}}
\newcommand{\cX}{\mathcal{X}}
\newcommand{\fA}{\mathfrak{A}}
\title{Hybrid dynamics of hyperbolic automorphisms of K3 surfaces}
\author{Reimi Irokawa}
\address{NTT Institute for Fundamental Mathematics, NTT Communication Science Laboratories, Nippon Telegraph and Telephone Corporation, 3-9-11 Midori-cho, Musashino-shi, Tokyo 180-8686, Japan}
\email{reimi.irokawa@ntt.com}
\subjclass[2020]{Primary: 37F10, Secondary: 14G22, 37P30, 32A08}
\keywords{complex dynamics; non-archimedean dynamics; complex geometry}
\date{today}
\begin{document}
\begin{abstract}
    We study degenerating families of hyperbolic dynamics over complex K3 surfaces by means of the theory of hybrid spaces by Boucksom, Favre, and Jonsson. For an analytic family of hyperbolic automorphisms $\{f_t: X_t\to X_t\}_{t\in\bD^*}$ over K3 surfaces $X_t$ that is possibly meromorphically degenerating at the origin, we consider the family of invariant measures $\{\eta_t\}$ on $X_t$ constructed by Cantat. The family $f_t$ induces a hyperbolic automorphism $f_{\bC((t))}^{\an}:X_{\bC((t))}^{\an}\to X_{\bC((t))}^{\an}$ over the induced non-archimedean K3 surface, where we also have a measure $\eta_0$ by Filip. Our main theorem states the weak convergence of $\{\eta_t\}$ to $\eta_0$ as $t\to0$ over the induced so-called hybrid space.
\end{abstract}
\maketitle

\section{Introduction}\label{section: hybridK3 - Introduction}

In this paper, we study degenerating families of hyperbolic dynamics over complex K3 surfaces by means of the theory of hybrid spaces. We briefly state the setup and main result here, and details will be given in Section \ref{section: hybrid K3 -general setup}.

Let $X$ be a complex projective K3 surface and $f: X\to X$ is a hyperbolic automorphism, i.e., the specral radius of the induced isometry $f^*:H^{1,1}(X,\bR)\to H^{1,1}(X,\bR)$ is strictly greater than $1$. Dynamical systems of such automorphisms are studied by Cantat \cite{Cantat:2001} since this is the only possible case where automorphisms of K3 surfaces have positive topological entropy. In particular, he constructed a positive measure on $X$ invariant under the action of $f$ by means of pluripotential theory over $X$, which is the unique measure of maximal entropy.

The main purpose of this paper is to understand the limiting behavior of a degenerating family of such dynamical systems. Let $\bD$ be the unit disk of $\bC$, and $\bD^*=\bD\setminus\{0\}$ be the punctured unit disk, which we regard as a parameter space. We consider a family of K3 surfaces $X\to\bD^*$ with a family of hyperbolic automorphisms $f: X\to X$, i.e., $X_t$ is a complex K3 surface for each $t\in\bD^*$, a map $f$ preserves the fiber structure $X\to \bD^*$, and each $f_t=f|_{X_t}: X_t\to X_t$ is a hyperbolic automorphism. We assume the family $(X,f)$ meromorphically degenerates at the origin. Namely, if we denote the set of holomorphic functions around $t=0$ in $\bD$ by $\cO_{\bD,0}$, $X$ and $f$ can be regarded as a scheme and its automorphism over $\Spec\Frac(\cO_{\bD,0})$, respectively. In this setting, we obtain a family of measures $\{\eta_t\}_{t\in\bD^*}$ on $X$ such that $\eta_t$ is invariant under the action of $f_t$ for each $t\in\bD^*$. We study its weak limit as $t\to0$, but we work on the so-called {\it hybrid spaces} attached to the given degenerating family $X$.

The theory of hybrid spaces is invented by Boucksom, Favre, and Jonsson in a series of papers \cite{Sebastien:2015uz}, \cite{BOUCKSOM:2015fr}, and \cite{Sebastien:2017le} to study the degeneration of complex manifolds. Later, it is applied to studying complex dynamics by Favre \cite{Favre:2020rz}. Roughly speaking, this theory enables us to regard non-archimedean objects as degenerating limits of complex objects. The non-archimedean objects are naturally induced from the degenerating families: we regard $X$ as a variety over $\Frac(\cO_{\bD,0})$, which is naturally regarded as a subring of the Laurent series field $\bC((t))$. We introduce the $t$-adic valuation to $\bC((t))$ and take a base extensions $X_{\bC((t))}$ and $f_{\bC((t))}:X_{\bC((t))}\to X_{\bC((t))}$. The required non-archimedean object is its Berkovich analytification $f_{\bC((t))}^{\an}:X_{\bC((t))}^{\an}\to X_{\bC((t))}^{\an}$. 

The dynamical systems of hyperbolic automorphisms over non-archimedean projective K3 surfaces are studied by Filip \cite{Filip:2019ij}. In particular, for the above automorphism $f_{\bC((t))}^{\an}: X_{\bC((t))}^{\an}$, he constructed a positive measure $\eta$ invariant under the action of $f^{\an}$. 

From the family $X$ of K3 surfaces, we can associate a so-called hybrid K3 surface $X^{\hyb}$ attached to $X$ as in \cite{Favre:2020rz}. For a fixed real number $0<r<1$, the space $X^{\hyb}$ admits an surjection $X^{\hyb}\to\bar{\bD}_r$, where $\bar{\bD}_r$ denotes the complex closed disc of radius $r$ with centered at $0$ such that
\begin{align*}
\psi_t: X^{\hyb}_t\simeq \begin{cases}
X_t &\text{ for }t\neq0,\\
X^{\an}_{\bC((t))} &\text{ for }t=0.
\end{cases}
\end{align*}
For each $t\neq0$, the invariant measure for the complex dynamics $(X_t,f_t)$ by Cantat is denoted by $\eta_t$ and that for the non-archimedean dynamics $(X_{\bC((t))}^{\an},f^{\an}_{\bC((t))})$ by Filip by $\eta_0$, so that we have a family of measures $\{(\psi_t)_*\eta_t\}_{t\in\bar{\bD}_r}$ on $X^{\hyb}$. We assume that, by taking positive scaling, all of $\eta_t$'s are probability measures. Then, the main theorem is as follows.
\begin{thm}[Theorem \ref{thm: hybridK3 - main theorem - main theorem}]\label{thm: hybridK3-weakconvergence-invariantmeasure}
    In the above notation, we have the following weak convergence of measures:
    \begin{align*}
        \lim_{t\to0}\psi_{t,*}\eta_t=\psi_{0,*}\eta_0.
    \end{align*}
\end{thm}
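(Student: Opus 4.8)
The plan is to reduce the statement to two ingredients: the convergence of the Green potentials of $f$ across the hybrid space, and the continuity of the (mixed) Monge--Ampère operator along the degeneration. Recall that Cantat's measure is the product $\eta_t=T_t^{+}\wedge T_t^{-}$ of the two Green currents attached to the eigenclasses $\theta_t^{\pm}\in H^{1,1}(X_t,\bR)$ of $f_t^{*}$ for the eigenvalues $\lambda^{\pm1}$, where $\lambda>1$ is the spectral radius (constant along the family, since $f_t^{*}$ acts by a fixed isometry of the locally constant lattice $H^{2}(X_t,\bZ)$); likewise Filip's measure is $\eta_0=T_0^{+}\wedge T_0^{-}$ for the non-archimedean Green currents of $f_{\bC((t))}^{\an}$ with the same $\lambda$. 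Since $X^{\hyb}$ is compact and all the $\eta_t$ are probability measures, to prove the weak convergence it suffices, by a standard $\varepsilon/3$ argument, to test against a uniformly dense subalgebra of $C^{0}(X^{\hyb})$, for instance the model/potential functions of \cite{BOUCKSOM:2015fr} and \cite{Favre:2020rz}. The problem thus becomes the joint convergence, on $X^{\hyb}$, of the data defining $T_t^{\pm}$.

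The second step produces workable potentials. The eigenclasses $\theta_t^{\pm}$ vary continuously and extend across $t=0$ to the non-archimedean eigenclasses, so on a suitable model $\cX\to\bD$ of the family one may choose a continuous family of closed representatives $\Theta_t^{\pm}$ (smooth on $X_t$ for $t\neq0$, a model form on $X_{\bC((t))}^{\an}$ for $t=0$) together with bounded potentials $\gamma_t^{\pm}$ solving $\lambda^{\mp1}f_t^{*}\Theta_t^{\pm}-\Theta_t^{\pm}=dd^{c}\gamma_t^{\pm}$. The Green currents are then
\begin{align*}
T_t^{\pm}=\Theta_t^{\pm}+dd^{c}\varphi_t^{\pm},\qquad \varphi_t^{\pm}=\sum_{n\ge0}\lambda^{\mp n}(f_t^{n})^{*}\gamma_t^{\pm},
\end{align*}
the same telescoping formula holding verbatim in the complex and in the non-archimedean fibres. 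Because $f_t$ is an automorphism we have $\|(f_t^{n})^{*}\gamma_t^{\pm}\|_{\infty}=\|\gamma_t^{\pm}\|_{\infty}$, so each series converges uniformly at the geometric rate $\lambda^{-n}$, uniformly in $t$. Consequently the hybrid convergence $\varphi_t^{\pm}\to\varphi_0^{\pm}$, viewed through $\psi_t$ as functions on $X^{\hyb}$, reduces by the uniform tail bound to the term-by-term convergence $\lambda^{\mp n}(f_t^{n})^{*}\gamma_t^{\pm}\to\lambda^{\mp n}(f_0^{n})^{*}\gamma_0^{\pm}$.

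Each term splits further into the hybrid convergence of the seed $\gamma_t^{\pm}\to\gamma_0^{\pm}$ and the continuity of pullback by the hybrid automorphism $f^{\hyb}\colon X^{\hyb}\to X^{\hyb}$ induced by the family, which I take to be part of the hybrid structure set up in Section~\ref{section: hybrid K3 -general setup}. The seed convergence is where the model enters: one must arrange $\gamma_t^{\pm}$ so that, read through $\psi_t$, it extends continuously across $t=0$ with the prescribed non-archimedean limit, which is the model-comparison content of \cite{BOUCKSOM:2015fr}. Granting $\varphi_t^{\pm}\to\varphi_0^{\pm}$, the desired conclusion $T_t^{+}\wedge T_t^{-}\to T_0^{+}\wedge T_0^{-}$, that is $\psi_{t,*}\eta_t\to\psi_{0,*}\eta_0$, then follows from the continuity of the mixed Monge--Ampère operator $(\varphi^{+},\varphi^{-})\mapsto(\Theta_t^{+}+dd^{c}\varphi^{+})\wedge(\Theta_t^{-}+dd^{c}\varphi^{-})$ under uniform convergence of bounded potentials, uniformly across the hybrid family.

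I expect this last continuity to be the main obstacle. In each fibre separately it is Bedford--Taylor theory (complex) or its non-archimedean analogue, but here one needs a single continuity statement bridging the two, generalizing the convergence-of-Monge--Ampère-measures theorem of Boucksom--Favre--Jonsson from the equilibrium/Calabi--Yau setting to the mixed dynamical measure $T^{+}\wedge T^{-}$; the asymmetry of the two eigenclasses and the fact that $\theta^{\pm}$ lie on the boundary of the nef cone will require care. A secondary difficulty occurs already at $t=0$: the family degenerates only meromorphically, so before any of the above one should pass, after a finite base change and blow-up, to a semistable model $\cX\to\bD$ on which the reference forms $\Theta_t^{\pm}$ and the automorphism extend, and then verify that the resulting hybrid limit is independent of these choices and coincides with Filip's $\eta_0$.
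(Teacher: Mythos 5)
Your global skeleton is in fact the paper's: approximate $\eta_t$ by finite-level measures built from a geometric series of potentials, test against the dense space of model functions, prove hybrid convergence at each fixed level, and use Filip's non-archimedean convergence for the remaining term. But there is a genuine quantitative gap at the heart of your second step. The claim that $\|(f_t^{n})^{*}\gamma_t^{\pm}\|_{\infty}=\|\gamma_t^{\pm}\|_{\infty}$ makes the series converge ``uniformly in $t$'' is only useful if $\sup_{t}\|\gamma_t^{\pm}\|_{\infty}<\infty$, and this fails for a meromorphically degenerating family: any seed $\gamma_t^{\pm}$ whose hybrid limit is a nontrivial non-archimedean function must blow up like $\log|t|^{-1}$ as $t\to0$ (in the paper this is visible in the transition functions $u_{ij}$ of the torsor $E$, which acquire zeros and poles along the components of $\cX_0$, so that $\log|u_{ij,m}|=O(\log|t|^{-1})$). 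The correct substitute, and the paper's key technical result (Proposition \ref{prop: hybridK3-weakconvergence-invariantmeasure-uniformestimation}, proved via Lemma \ref{lmm: hybridK3-weakconvergence-invariantmeasure-uniformestimation-cohomology} on $\lambda^{-n}A^{n}(n_i)^{t}$), is a tail bound of the form $\epsilon_n\log|t|^{-1}$ with $\epsilon_n\to0$ \emph{independently} of $t$; this is exactly compensated by the normalization $\eta(x)=\log r/\log|\pi(x)|$ built into the hybrid model functions $\Phi_{\cF}$. Without tracking that $\log|t|^{-1}$ weight, your uniform tail estimate is false as stated and the reduction to term-by-term convergence does not close.

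The second gap is that you explicitly defer your ``main obstacle'' --- a single continuity statement for the mixed Monge--Amp\`ere operator under uniform convergence of potentials bridging the complex and non-archimedean fibres --- and no such bridging theorem is invoked or needed in the paper. The paper circumvents it by applying the mixed operator only at fixed finite level $n$, to the smooth metrics $h_{\pm n}^{m}=((f^{\pm n})^{*}h)^{m}$ and the model metrics $g_{\pm n}^{m}$, for which Proposition \ref{prop: hybridK3-weakconvergence-modelmetrics} gives hybrid weak convergence directly (the non-archimedean limit being the explicit atomic measure $\sum_{E}(\cL_1|_E\cdots\cL_n|_E)\delta_{x_E}$); the passage $n\to\infty$ is then performed at the level of integrals against a fixed model function, via integration by parts (moving $dd^{c}$ onto $\phi_{\cF}$), the uniform estimate above, and a cohomological, $t$-independent bound on the masses $\int dd^{c}\phi_{\cF}\wedge dd^{c}s_i^{*}G^{\mp}$. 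This also dissolves your worry about the eigenclasses lying on the boundary of the nef cone, since all finite-level approximants come from genuinely (semi)ample model data. Finally, note that your reduction to ``a semistable model on which the automorphism extends'' is too optimistic: in general no single snc model carries an extension of $f$, which is why the paper works with pairs of models and morphisms $f'^{(n)}:\cX^{(n)}\to\cX$ extending $f^{n}$ separately for the domain and the range.
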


As written above, the theory of hybrid spaces is first applied to the study of degeneration of complex dynamics by Favre \cite{Favre:2020rz}. He shows a similar result of weak convergence of canonical measures for a degenerating family of holomorphic endomorphisms over projective spaces $\bP^n(\bC)$. The author shows a similar result for degenerating families of complex H{\'e}non mappings \cite{irokawa2023hybrid}. As an application of Favre's result, Bianchi and Okuyama \cite{Bianchi2018DegenerationOQ} study degeneration of quadratic polynomial endomorphisms over $\bP^2(\bC)$ when they degenerate to a H{\'e}non map. Also, we should note that before the work of Favre, DeMarco and Faber \cite{DE-MARCO:2014tn} studies the degeneration of rational functions on $\bP^1(\bC)$ by a similar technique, where they used a Berkovich projective line over the completion of the Puiseux series field $\bC\{\{t\}\}$, and Okuyama \cite{okuyama2024degenerating} fixed their proof.

Our result treats a family of hyperbolic automorphisms over K3 surfaces, where both morphisms and phase spaces move along $t$. Also, all the known results treat degenerating morphisms on either $\bA^{2}(\bC)$ or $\bP^{n}(\bC)$, both of which have global coordinates while K3 surfaces do not. These differences make our argument more delicate, as seen in Section \ref{section: hybridK3 - main theorem}. Indeed, we need two different snc models of $X$ for the domain, and the range of $f$ to make $f$ extend as a morphism to the snc models.

\subsection*{Organization of this paper}
We note that all the proofs are presented in Section \ref{section: hybridK3 - Weak convergence of mixed Monge-Ampere measures of model metrics} and \ref{section: hybridK3 - main theorem}. 

In Section \ref{section: hybrid K3 -general setup}, we present background knowledge for state our main result precisely, including dynamics over complex and non-archimedean K3 surfaces and the definition of hybrid spaces. In Section \ref{section: hybridK3 - backgrounds}, we collect facts from the theory of hybrid spaces, most of which are non-dynamical. By using them, we show a key result to show the main theorem in Section \ref{section: hybridK3 - Weak convergence of mixed Monge-Ampere measures of model metrics}, the weak convergence of mixed Monge-Amp{\`e}re measures. This is a non-dynamical result either, and valid for more general varieties. We conclude this paper in Section \ref{section: hybridK3 - main theorem} by showing the main result.

\subsection*{Acknowledgement}
I would like to thank Professor Mattias Jonsson for his kind advice and discussion. Especially, he pointed out my mistake on the proof of Proposition \ref{prop: hybridK3-weakconvergence-invariantmeasure-uniformestimation} and help me fix it.

\section{General setup and the dynamics over complex and non-archimedean K3 surfaces} \label{section: hybrid K3 -general setup}
In this section, we collect known results for stating our main result precisely, which is done in Section \ref{section: hybridK3 - main theorem}. More precisely, the organization of this section is as follows: in Section \ref{subsection: backgrounds - dynamics of K3 surfaces}, we recall the basics of dynamics over complex and non-archimedean K3 surfaces. In Section \ref{subsection: hybridK3 - backgrounds - Berkovich geomety}, we recall a general theory of Berkovich analytification, which includes the theory of hybrid spaces. In section \ref{subsection: background - hybrid K3 dynamics}, we apply Section \ref{subsection: hybridK3 - backgrounds - Berkovich geomety} to our specific situation.

\subsection{Complex and non-archimedean K3 surfaces} \label{subsection: backgrounds - dynamics of K3 surfaces}
We recall basic results from algebraic and non-archimedean geometry and dynamics over them first. Algebro-geometric results for K3 surfaces are well-known, for instance, from \cite{Huybrechts_2016}. For results on dynamical systems over complex K3 surfaces, we refer \cite{Cantat:2001}. For those over non-archimedean ones, we refer \cite{Filip:2019ij}.

A complex {\it K3 surface} is a compact complex manifold of dimension $2$ whose canonical bundle is trivial and irregularity is zero. For a K3 surface $X$, $H^2(X,\bZ)$ is a finitely generated free $\bZ$-module. Hence, the N{\'e}ron-Severi group $\NS(X)$ is also a finitely generated free $\bZ$-module, equipped with a natural bilinear form defined from the cup product. Let $\Pic(X)$ be the Picard group of $X$. Then, since the irregularity is zero, there exists an isomorphism between the N{\'e}ron-Severi group and the Picard group of $X$. Especially, $\Pic(X)$ is torsion-free. We note that the bilinear form on $\NS(X)$ is also compatible with that on $\Pic(X)$ defined by the intersection product. In this paper, we only treat projective K3 surfaces. Then, the bilinear form on $\NS(X)$ is non-degenerate and of sign $(1,r-1)$ where $r$ is a rank of $\NS(X)$.

Since the action $f^*:\Pic(X)\otimes_{\bZ}\bR\to\Pic(X)\otimes_{\bZ}\bR$ preserves the lattice structure of $\Pic(X)$ and the bilinear form, we have a classification of the automorphism $f$. Namely, $f$ must satisfy exactly one of the following properties:
\begin{itemize}
    \item $f$ is {\it periodic} (or {\it elliptic}), i.e., $f^*$ preserves the positive cone ($\{x\in \Pic(X)\otimes_{\bZ}\bR; x^2>0\}$) and $f^*$ is a rotation, in which case we must have $(f^*)^n=f^*$ for some natural number $n>1$;
    \item $f$ is {\it parabolic}, i.e.,  $f^*$ admits a unique eigenvector $v$ up to scalar such that $v^2=0$ and its eigenvalue is exactly $1$, and all the other eigenvalues are of absolute values $1$;
    \item $f$ is {\it hyperbolic}, i.e., $f^*$ admits a unique eigenvector $v$ up to scalar such that $v^2=0$ and its eigenvalue has absolute value greater than $1$.
\end{itemize}
We remark that, according to the above classification, $f$ has an eigenvalue whose absolute value is greater than $1$ if and only if $f$ is hyperbolic. That is, once we find an eigenvalue $\lambda>1$, it must be the only eigenvalue whose absolute value is greater than $1$. In this case, the corresponding eigenspace must be $1$-dimensional, $\lambda^{-1}$ is also an eigenvalue with $1$-dimensional eigenspace, and all the other eigenvalues are of absolute value $1$.

In relation to dynamics, the following Gromov-Yomdin's theorem \cite{gromov2003entropy} is important: for an automorphism $f: X\to X$, its topological entropy is the logarithm of the spectral radius of the induced isometry $f^*:H^{1,1}(X,\bR)\to H^{1,1}(X,\bR)$. In this case, $H^{1,1}(X,\bR)\simeq\Pic(X)\otimes_{\bZ}\bR$. Therefore, there is only one possible case in the above classification where the topological value is strictly positive: when $f$ is hyperbolic. For such a morphism, Cantat \cite{Cantat:2001} shows the following fact:
\begin{prop}[\cite{Cantat:2001}, Th{\'e}or{\`e}me 2.4]\label{proposition: hybridK3 - canocurrents for cpx K3 surfaces}
    Let $X$ be a complex projective K3 surface and $f: X\to X$ be a hyperbolic automorphism. Then, there exist two positive closed (1,1)-currents $\eta_{\pm}$ on $X$ such that
\begin{align*}
    f_*\eta_{\pm}=\lambda^{\pm 1}\eta_{\pm}.
\end{align*}
These currents are unique up to positive scaling, respectively, and they both locally have continuous potential functions.
\end{prop}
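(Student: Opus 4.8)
The plan is to build $\eta_+$ as the normalized limit of pushforwards $\lambda^{-N}(f_*)^N\omega$ of a fixed Kähler form, and to obtain $\eta_-$ by running the same argument for $f^{-1}$.

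First I would record the linear-algebraic input. By the Hodge index theorem the intersection form on $H^{1,1}(X,\bR)$ has signature $(1,h^{1,1}-1)$, and $f_*=(f^{-1})^*$ is an isometry for it. Hyperbolicity gives a simple leading eigenvalue $\lambda>1$ of $f_*$ with a one-dimensional eigenline spanned by a nef class $\theta_+$ satisfying $\theta_+^2=0$; the action on the orthogonal complement of $\langle\theta_+,\theta_-\rangle$ is orthogonal for a negative-definite form, so all remaining eigenvalues have modulus $1$, while $\lambda^{-1}$ is an eigenvalue with eigenline $\theta_-$ and $\theta_+\cdot\theta_->0$. Fixing a Kähler class $[\omega]$ one has $[\omega]\cdot\theta_->0$, so the $\theta_+$-component $a>0$ of $[\omega]$ in the eigenbasis is strictly positive and $\lambda^{-N}(f_*)^N[\omega]\to a\,\theta_+$ at geometric rate, the error being governed by the spectral gap between $\lambda$ and $1$.

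Next I would construct the current and control its potential. Choosing a smooth (not necessarily positive) representative $\omega_+$ of $\theta_+$, the form $f_*\omega_+-\lambda\omega_+$ has vanishing class since $f_*\theta_+=\lambda\theta_+$, so the $dd^c$-lemma gives a smooth $u$ with $f_*\omega_+-\lambda\omega_+=dd^c u$. A telescoping computation then yields
\begin{align*}
\lambda^{-N}(f_*)^N\omega_+=\omega_++dd^c\Big(\sum_{k=0}^{N-1}\lambda^{-k-1}\,u\circ f^{-k}\Big),
\end{align*}
and since $\|u\circ f^{-k}\|_\infty=\|u\|_\infty$ and $\sum_k\lambda^{-k-1}<\infty$, the potentials converge uniformly. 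The limit $\eta_+:=\omega_++dd^c\varphi_+$, with $\varphi_+$ continuous, is thus a closed $(1,1)$-current with continuous local potentials lying in the class $\theta_+$, and $f_*\eta_+=\lambda\eta_+$ follows directly from the construction. To see positivity I would compare with the Kähler pushforwards: the currents $\lambda^{-N}(f_*)^N\omega-a\,\lambda^{-N}(f_*)^N\omega_+$ have cohomology classes in the sum of the eigenspaces of modulus at most $1$, hence tending to $0$ geometrically, and an analogous telescoping shows the corresponding potentials converge uniformly to $0$; therefore $\lambda^{-N}(f_*)^N\omega\to a\,\eta_+$ weakly. As a weak limit of the positive closed currents $\lambda^{-N}(f_*)^N\omega$, the current $a\,\eta_+$, and hence $\eta_+$, is positive, and it is nonzero because its class $\theta_+$ is nonzero.

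Finally, for uniqueness up to scaling, suppose $\eta'$ is a positive closed current with $f_*\eta'=\lambda\eta'$. Its class is a nonzero eigenvector of $f_*$ for $\lambda$, hence $c\,\theta_+$ with $c>0$ (pairing against $[\omega]$ shows $c\geq0$, and $c=0$ is impossible for a nonzero positive closed current on a Kähler manifold); after rescaling we may assume $[\eta']=[\eta_+]$, so $\eta'-\eta_+=dd^c v$ for a bounded function $v$ by the $dd^c$-lemma. Invariance forces $dd^c(v\circ f^{-1}-\lambda v)=0$, so $v\circ f^{-1}-\lambda v$ is pluriharmonic, hence constant on the compact $X$. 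Comparing the maximum and minimum of $v$ in the relation $v\circ f^{-1}=\lambda v+\mathrm{const}$ gives $(1-\lambda)(\max v-\min v)=0$, and $\lambda>1$ forces $v$ constant, whence $\eta'=\eta_+$. I expect the delicate point to be exactly the extraction of positivity together with the uniform potential bounds: the eigenclass $\theta_+$ lies on the boundary of the nef cone ($\theta_+^2=0$, no Kähler representative), so positivity cannot be read off the limiting class and must instead be inherited from the weak limit of the Kähler pushforwards, which is why the two parallel telescoping estimates — one in the class $\theta_+$ for continuity, one in the contracted directions for positivity — are both needed.
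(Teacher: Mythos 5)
This proposition is quoted from Cantat (Th\'eor\`eme 2.4); the paper gives no internal proof, so the comparison is with Cantat's original argument. Your existence-and-positivity construction follows essentially that standard route: isolate the simple isotropic eigenclass $\theta_+$ on the boundary of the nef cone, solve $f_*\omega_+-\lambda\omega_+=dd^c u$ by the $dd^c$-lemma, telescope to get uniformly convergent continuous potentials, and inherit positivity from the weak limit of $\lambda^{-N}(f_*)^N\omega$. Your "analogous telescoping" for the contracted directions does work, but note what it secretly uses: one picks uniformly bounded smooth representatives $\sigma_N$ of the classes $(f_*)^N([\omega]-a\theta_+)$ (bounded because $f_*$ has bounded powers on the complement of $\bR\theta_+$), and since only a \emph{single} application of $f_*$ ever hits this bounded family, the normalized potentials satisfy $\|w_{N+1}\|_\infty\leq\|w_N\|_\infty+C$, so $\|w_N\|_\infty=O(N)$ and $\lambda^{-N}w_N\to0$ uniformly. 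That suffices, and your identification of this two-estimate structure as the crux is accurate.

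The genuine gap is in the uniqueness step. The $dd^c$-lemma does not produce a \emph{bounded} $v$ with $\eta'-\eta_+=dd^c v$: it produces a distribution, and since $\eta'$ is merely a positive closed current in the class $\theta_+$, the best a priori regularity is that $v$ is the difference of a quasi-psh function and the continuous potential $\varphi_+$ --- upper semicontinuous and bounded above, but possibly unbounded below (even $-\infty$ on a pluripolar set). Consequently your comparison of $\max v$ and $\min v$ is unavailable: $v$ need not attain, or even have, a finite infimum. The functional equation $v\circ f^{-1}=\lambda v+c_0$ is still valid (a $dd^c$-closed distribution is pluriharmonic, hence constant on compact $X$), and the sup side survives: normalizing $w:=v-\sup v$ gives $w\leq0$ and $w\circ f^{-1}=\lambda w$, but iterating only yields $w(x)=\lambda^{-n}w(f^{-n}x)$, which is vacuous without a lower bound. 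A standard repair uses that $f$ preserves the holomorphic $2$-form of the K3 up to a unimodular factor, hence preserves the volume form $\Omega\wedge\bar{\Omega}$: integrating $w=\lambda^{-n}\,w\circ f^{-n}$ against this invariant volume gives $\int_X w\,dV=\lambda^{-n}\int_X w\,dV$, so $\int_X w\,dV=0$; since $w\leq0$ and $w\in L^1$, we get $w=0$ almost everywhere, hence everywhere, because quasi-psh functions are determined by their a.e.\ values. With that substitution your uniqueness argument closes; as written, the bounded-$v$ step would fail.
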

Since they both have continuous potentials, their wedge product $\eta=\eta_+\wedge\eta_-$ is a well-defined and positive measure on $X$. By definition of $\eta_{\pm}$, the measure $\eta$ is invariant under the action of $f$. We note that the homology class of $\eta_{\pm}$ must be in the eigenspace of the eigenvalue $\lambda^{\pm1}$ respectively, and their self-intersection products are $0$. We hence deduce that $\eta_{\pm}\wedge \eta_{\pm}=0$.

When we restrict surfaces to algebraic ones, we can generalize the notion of K3 surfaces for any base field $K$. For convenience, we assume that the characteristic of $K$ is $0$. a {\it K3 surface} over $K$ is a connected complete non-singular algebraic variety of dimension $2$ whose canonical bundle is trivial and irregularity is zero. Note that when $K=\bC$, the complex analytification of K3 surfaces in this sense are the complex projective K3 surfaces discussed above.

As in the complex case, $\NS(X)\simeq\Pic(X)$ has a natural bilinear form with sign $(1,r-1)$. Hence the action $f^*:\Pic(X)\otimes_{\bZ}\bR\to\Pic(X)\otimes_{\bZ}\bR$ has the same classification as above.

Now we take $K$ as a field of characteristic $0$ equipped with complete, non-trivial, and non-archimedean valuation. Let $X$ be a K3 surface and $f: X\to X$ be a hyperbolic automorphism. For an algebraic variety and its morphism, we can consider the Berkovich analytification of them. We postpone its definition for a while, and so far, we consider them as some topological space and its homeomorphism, denoted by $X^{\an}$ and $f^{\an}: X^{\an}\to X^{\an}$. Filip \cite{Filip:2019ij} studied the dynamics of such a pair $(X^{\an},f^{\an})$. Especially he constructed the currents and the measure similar to the ones above by Cantat:
\begin{prop}\label{proposition: hybridK3 - canocurrents for NA K3 surfaces}
    Let $X$ be a K3 surface over a field $K$ of characteristic $0$ equipped with a non-archimedean and non-trivial valuation and $f:X\to X$ be its hyperbolic automorphism. Then, there exist two positive closed (1,1)-currents $\eta_{\pm}$ on $X^{\an}$ such that
    \begin{align*}
        f^*\eta_{\pm}=\lambda^{\pm1}\eta_{\pm}.
    \end{align*}
    They both locally have continuous potential functions.
\end{prop}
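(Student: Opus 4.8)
The plan is to transport Cantat's complex construction (Proposition~\ref{proposition: hybridK3 - canocurrents for cpx K3 surfaces}) into the non-archimedean pluripotential theory on $X^{\an}$ developed by Boucksom--Favre--Jonsson, where ``positive closed $(1,1)$-current with continuous potential'' is read as ``continuous semipositive metric on a line bundle'' and $dd^c$ is the non-archimedean $dd^c$-operator. By symmetry it suffices to produce $\eta_+$: applying the construction to the hyperbolic automorphism $f^{-1}$ (whose expanding eigenvalue is again $\lambda$, with the roles of $\eta_+$ and $\eta_-$ exchanged) yields $\eta_-$.

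First I would record the linear algebra of $f^*$ on $N^1(X)=\Pic(X)\otimes_{\bZ}\bR$. As recalled above, $f^*$ has a one-dimensional eigenspace $\bR\theta_+$ for $\lambda$ with $\theta_+$ nef and $\theta_+^2=0$. Fixing an ample class $\alpha$ and decomposing it along the eigenspaces, one finds $\lambda^{-n}(f^*)^n\alpha\to c\,\theta_+$ with $c>0$: pairing with $\theta_-$ kills every eigencomponent except the $\theta_+$-one (eigenspaces for eigenvalues whose product is $\neq 1$ are orthogonal for an isometry), and $\alpha\cdot\theta_-$, $\theta_+\cdot\theta_-$ are both positive. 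After rescaling I may assume $\theta_+=\lim_n\lambda^{-n}(f^*)^n\alpha$.

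Next comes the potential-theoretic construction. Fix a line bundle $L$ of class $\theta_+$ and a continuous reference metric; since $f^*\theta_+=\lambda\theta_+$ numerically, the discrepancy between $f^*(\text{reference})$ and $\lambda\cdot(\text{reference})$ is a continuous model function $g$ on $X^{\an}$. I would then solve the cohomological equation $\varphi\circ f+g=\lambda\varphi$ by the telescoping series
\[
\varphi=\sum_{n\ge 0}\lambda^{-n-1}\,g\circ f^{n},
\]
which converges uniformly because $\|g\circ f^{n}\|_{\infty}=\|g\|_{\infty}$ and $\lambda>1$. Hence $\varphi$ is continuous, the associated metric/current $\eta_+$ has a continuous potential, and $f^*\eta_+=\lambda\eta_+$ holds by construction.

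The main obstacle is \emph{positivity}: one must show $\eta_+$ is genuinely semipositive and not merely a solution of the functional equation. For this I would realize $\eta_+$ as the uniform limit of the normalized pullbacks $\lambda^{-n}(f^{n})^{*}m_{\alpha}$ of a continuous semipositive metric $m_\alpha$ representing $\alpha$: each pullback is semipositive (pullback by a morphism preserves semipositivity), the underlying classes converge to $\theta_+$, and the telescoping estimate upgrades this to uniform convergence of potentials. Since the semipositive cone of continuous metrics is closed under uniform limits in the Boucksom--Favre--Jonsson theory, the limit $\eta_+$ is semipositive, and uniqueness up to scaling follows from the one-dimensionality of the $\lambda$-eigenspace together with a comparison principle for the potentials. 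The delicate point throughout is that $X^{\an}$ carries no global coordinates, so every analytic statement (existence of a continuous semipositive reference metric, stability of semipositivity under uniform limits and under $f^*$) must be phrased through model metrics on snc models rather than through local potentials, exactly as in Filip's \cite{Filip:2019ij} treatment.
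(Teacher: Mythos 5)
Your overall strategy is the right one, and it is essentially the route the paper takes (the paper does not prove this proposition in place --- it is quoted from Filip \cite{Filip:2019ij} --- but Section \ref{subsec: main theorem - construction of the inv measures and main theorem} reconstructs exactly this machinery: normalized pullbacks of a reference metric, a geometric telescoping estimate, and semipositivity preserved under uniform limits). However, there is one genuine misstep: you cannot ``fix a line bundle $L$ of class $\theta_+$.'' The expanding eigenvalue $\lambda$ is an irrational algebraic integer: since $f^*$ preserves the lattice $\NS(X)$ and is invertible over $\bZ$, a rational $\lambda>1$ would be an integer, while $\lambda^{-1}$ is also an eigenvalue of $f^*$ and hence an algebraic integer --- impossible. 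Consequently the $\lambda$-eigenline meets $\NS(X)_{\bQ}$ only in $0$, so $\theta_+$ is an honestly irrational $\bR$-class and no actual line bundle represents it; the ``discrepancy function $g$'' in your cohomological equation $\varphi\circ f+g=\lambda\varphi$ is therefore not defined as you set it up. This is precisely why the paper (following Filip) works instead on the $\bG_m^k$-torsor $E=\bigoplus_i(L_i^{\otimes m})^*$ over an integral basis $L_1,\ldots,L_k$ of $\Pic(X)$, encodes metrics on varying $\bR$-classes through the functions $G_n=\frac{1}{m\lambda^n}\log\left((F^n)^*\varphi_0^*h^m\right)$, and controls the telescoping via the spectral estimate $\left\|\lambda^{-(n+1)}A^{n+1}(n_i)^t-\lambda^{-n}A^n(n_i)^t\right\|\leq C_1\delta^{-n}$ of Lemma \ref{lmm: hybridK3-weakconvergence-invariantmeasure-uniformestimation-cohomology}. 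Your alternative positivity argument via $\lambda^{-n}(f^n)^*m_\alpha$ has the same hidden issue in a milder form: consecutive terms are metrics on \emph{different} classes $\lambda^{-n}(f^*)^n\alpha$, so ``uniform convergence of potentials'' only makes sense after choosing trivializations in a fixed basis of $\Pic(X)_{\bR}$ --- again the torsor bookkeeping.

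The remaining ingredients of your proposal are sound and match the paper's (i.e., Filip's) treatment: the linear-algebra step $\lambda^{-n}(f^*)^n\alpha\to c\,\theta_+$ with $c>0$ via orthogonality of eigenspaces of an isometry, the uniform geometric convergence from $\lambda>1$, and closedness of continuous semipositive metrics under uniform limits in the Chambert-Loir--Ducros/Boucksom--Favre--Jonsson framework (in the paper's language, uniform limits of model metrics are psh-regularizable, cf.\ the discussion after Theorem 8.4 of \cite{BoucksomEriksson2021}). One small further remark: the non-archimedean statement you were asked to prove does not assert uniqueness, so your comparison-principle paragraph is unnecessary --- and as sketched it is the least substantiated part of the argument. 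With the $\bR$-class formalism substituted for the fictitious integral $L$, your proof closes the gap and coincides in substance with the construction the paper rehearses in Section \ref{subsec: main theorem - construction of the inv measures and main theorem}.
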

Here, the current is in the sense of Chambert-Loir and Ducros; they \cite{chambertloir2012formes} constructed a theory of forms and currents over Berkovich varieties. They also define the wedge product of currents with continuous potentials. We apply them to $\eta_{\pm}$ to obtain a measure $\eta=\eta_+\wedge\eta_-$ on $X^{\an}$, which is invariant under the action of $f$ as in the complex case. The equality $\eta_{\pm}\wedge\eta_{\pm}=0$ in the complex case comes from the cohomological argument, so it holds in the non-archimedean case, too.

\subsection{Non-archimedean and hybrid analytification of $X$} \label{subsection: hybridK3 - backgrounds - Berkovich geomety}
We consider two types of Berkovich analytification for the variety $X$ as above.

The first is analytification as a variety over $\bC((t))$. We note that the coefficients of the defining equations of $X$ in $\bP^{N}$ are naturally regarded as those in $\bC((t))$. We fix $0<r<1$, take a base of the (multiplicative) $t$-adic valuation as the given $r$, and denote it by $|\cdot|_r$, i.e., 
\begin{align*}
    \left|\sum_{n=m}^{\infty}a_nt^n\right|_r:=r^{m},
\end{align*}
where $a_m\neq0$. Then, $X$ can be regarded as a projective variety over the non-archimedean field $\bC((t))$, which is denoted by $X_{\bC((t))}$. For this $X_{\bC((t))}$, we define the Berkovich analytification $X_{\bC((t))}^{\an}$ as follows: the underlying set consists of the multiplicative seminorms on all the residue fields of $X_{\bC((t))}$ whose restriction to $\bC((t))$ is the given $t$-adic valuation. That is, if we denote the residue field at $x$ by $\kappa(x)$, an element of $X^{\an}_{\bC((t))}$ is written as a pair $(x,|\cdot|)$ of a point $x\in X_{\bC((t))}$ and a norm $|\cdot|:\kappa(x)\to\bR$ that is multiplicative and $|a|=|a|_r$ for any $a\in \bC((t))(\subset \kappa(x))$. The topology is a coarsest one such that, for any affine open subset $U\subset X_{\bC((t))}$ and any regular function $f$ on $U$, the evaluation map $(x,|\cdot|)\mapsto |\overline{f}_x|$ is continuous, where $\overline{f}_x$ is the class of the germ of $f$ at $x$ in $\kappa(x)$. We denote this value by $|f(x)|$. Since we assume that $X_{\bC((t))}$ is a proper variety over $\bC((t))$, the analytification $X_{\bC((t))}^{\an}$ is compact and Hausdorff.

We remark that the following several kinds of seminorms are important in our later discussion. First, the set of closed point of $X_{\bC((t))}$ is naturally embedded to $X_{\bC((t))}^{\an}$. Indeed, $\kappa(x)$ is a finite extension of $\bC((t))$ when $x$ is a closed point, so there is a unique norm extending the one on $\bC((t))$. We call points corresponding to closed points of $X_{\bC((t))}$ {\it classical points}.

The second one is a so-called {\it divisorial point}. Divisorial points are norms on the field of rational functions $K(X_{\bC((t))})$, i.e., the residue field of the generic point of $X_{\bC((t))}$. Take any snc model $\cX$ of $X$, i.e., $\cX$ is a complex manifold of dimension $n+1$, there is a proper map $\pi_{\cX}:\cX\to\bD$ with an isomorphism $\pi_{\cX}^{-1}(\bD^*)\simeq X$, and $\cX_0:=\pi^{-1}\{0\}$ defines a simple normal crossing divisor of $\cX$. We write $\cX_0=\sum_E b_E E$, where $E$ runs over all the irreducible components of $\cX_0$. Then, the local ring at the generic point of $E$ is a discrete valuation ring, and thus, it defines a discrete valuation. Its field of fraction is $K(X_{\bC((t))})$ and we have $\ord_E(t)=b_E$, where we denote the corresponding additive valuation by $\ord_E$. Finally, if we set for any $f\in K(X_{\bC((t))})$,
\begin{align*}
    |f(x_E)|:=r^{\ord_E(f)/b_E},
\end{align*}
this defines a point in $X_{\bC((t))}^{\an}$. We call such $x_E$ a divisorial point.

The second analytification is the hybrid one. In order to define it, we prepare some notation. The {\it hybrid norm} $|\cdot|_{\hyb}$ on $\bC$ is defined as
\begin{align*}
    |c|_{\hyb}=\max(1,|c|),
\end{align*}
where $c$ is any non-zero complex number, and $|\cdot|$ in the right hand side is the euclidean norm on $\bC$. This $|\cdot|_{\hyb}$ defines a sub-multiplicative norm on $\bC$. We define a subring $A_r$ of $\bC((t))$ consisting of all the element $f=\sum_{n=m}^{\infty}a_nt^n\in\bC((t))$ such that $\sum|a_n|_{\hyb}r^n<\infty$. If we introduce a sub-multiplicative norm $\|\cdot\|_{\hyb,r}$ on $A_r$ by
\begin{align*}
    \left\|\sum_{n=m}^{\infty}a_nt^n\right\|:=\sum_{n=m}^{\infty}|a_n|_{\hyb}r^n,
\end{align*}
then $A_r$ becomes a Banach ring. We take its Berkovich spectrum $\cC_{\hyb}(r)$, the set of all the bounded multiplicative seminorms on $A_r$ with the topology of the pointwise convergence.

For a point $t_0\in\overline{\bD}^*_r$, define a seminorm $\tau(t_0)$ by
\begin{align*}
    \sum_{n=m}^{\infty}a_nt^n\mapsto \left|\sum_{n=m}^{\infty}a_nt_0^n\right|^{\frac{\log r}{\log |t_0|}},
\end{align*}
where $|\cdot|$ is the euclidean norm on $\bC$. For $t_0=0$, we define $\tau(0)$ to be the $t$-adic norm $|\cdot|_r$. By Proposition 1.1 of \cite{Favre:2020rz}, $\tau(t_0)$ for any $t_0\in\overline{\bD}_r$ defines a point in $\cC_{\hyb}(r)$ and the map $\tau:\overline{\bD}_r\to\cC_{\hyb}(r)$ is actually homeomorphic.

As coefficients of the defining equation of $X$ are defined on the unit punctured disc, they are naturally regarded as elements of $A_r$. We set $X_{A_r}$ as the projective variety over $A_r$ defined in this way.

For any affine variety $V=\Spec B$ where $B$ is an $A_r$-algebra of finite type, we define its hybrid analytification $V^{\hyb}$ in the following way; the underlying set consists of the multiplicative seminorm on $B$ whose restriction to $A_r$ is bounded by the valuation $\|\cdot\|_{\hyb,r}$. For any $x\in V$ and $f\in B$, we denote the value of $f$ evaluated at $x$ by $|f(x)|$. The topology on $V^{\hyb}$ is the weakest one such that $x\mapsto|f(x)|$ is continuous for any $f\in B$. Then, for $X_{A_r}$, its hybrid analytification, denoted by $X^{\hyb}$, is now defined by patching the hybrid analytification of its affine open cover. 

The structural morphism $X_{A_r}\to\Spec A_r$ induces a continuous map $\pi_{\hyb}:X^{\hyb}\to\cC_{\hyb}(r)$. Note that, on an affine open subset $V=\Spec B$ of $X_{A_r}$, the structural morphism is $A_r\to B$, and the map $V^{\hyb}\to\cC_{\hyb}(r)$ is a restriction map; since a point $x\in U^{\hyb}$ is a multiplicative seminorm, we can restrict it to $A_r$. Then, since we require the restriction of $x$ to $A_r$ to be bounded by the norm on $A_r$, this restriction also defines a point on $\cC_{\hyb}(r)$. 
By Theorem 1.2 of \cite{Favre:2020rz}, We have a natural homeomorphism $\psi_0:\pi_{\hyb}^{-1}\{\tau(0)\}\to X_{\bC((t))}^{\an}$ and $\psi:\pi_{\hyb}^{-1}\left(\tau(\bar{\bD}_r^*)\right)\to\bar{X}_r$ so that $\pi_{\hyb}\circ\psi=\tau\circ\pi$. Especially, when we put $\psi_t:=\psi|_{\pi^{-1}\{\tau(t)\}}$ for $t\neq0$, we have a family of homeomorphism $\{\psi_t\}_{t\in\bar{\bD}_r}$
\begin{align*}
    \psi_t:\pi_{\hyb}^{-1}\{\tau(t)\}\to \begin{cases}
        X_t &\text{ for }t\neq 0, \\
        X_{\bC((t))}^{\an} &\text{ for }t=0.
    \end{cases}
\end{align*}

For $t_0\neq0$, a point $z_0\in X_{t_0}$ corresponds to a multiplicative seminorm
\begin{align*}
    \phi\mapsto\left|\phi(t_0,z_0)\right|^{\frac{\log r}{\log|t_0|}}
\end{align*}
for any rational function $\phi$ on $X_{A_r}$ via $\psi_{t_0}$.

Define a function $\eta:X^{\hyb}\to[0,1]$ for later discussion. For $x\in X$, put
\begin{align*}
    \eta(\psi(x))=\frac{\log r}{\log|\pi(x)|},
\end{align*}
which appears in the definition of the corresponding multiplicative seminorm $\psi(x)$ and takes values in $(0,1]$. For $x\in \pi_{\hyb}^{-1}\{\tau(0)\}$, we put $\psi(x)=0$. The functon $\eta$ is continuous on $X^{\hyb}$.

We remark that analytic varieties $X^{\an}_{\bC((T))}$ and $X^{\hyb}$ are not just topological spaces but also locally ringed spaces, i.e., they have sheaves of analytic functions over the underlying spaces. Especially on each open set of $X_{\bC((t))}^{\an}$, we can attach the set of analytic functions, and it naturally contains regular functions in the sense of algebraic geometry when the open set is affine. Every algebraic morphism is not only continuous but also analytic. 

\subsection{Hybrid K3 surfaces and its dynamics}\label{subsection: background - hybrid K3 dynamics}
Let $X\to \bD^*$ be an analytic family of projective K3 surfaces possibly meromorphically degenerating at the origin. Set $K=\Frac{\cO_{\bD,0}}$ be the field of fractions of analytic functions defined around $0$. Take a base extension of $X$ to $K$ to obtain a K3 surface $X_K$ over $K$. We call an analytic family of automorphisms $\{f_t:X_t\to X_t\}_{t\in\bD^*}$ is {\it hyperbolic} if $f_K:X_K\to X_K$ is hyperbolic.

Note that in this case, $f_t$ is hyperbolic for any $t$ small enough. To see this, we take a basis $L_{1,K},\ldots,L_{k,K}$ of $\Pic(X_K)$. We assume all of the $L_{i, K}$'s are ample for the later discussion. By shrinking a disc if necessary, there exist ample line bundles $L_1,\ldots, L_k$ of $X$ such that their base extension to $K$ are $L_{1, K},\ldots, L_{k, K}$. We denote by $L_{i,t}$ the line bundle given by the restriction of $L_i$ to the K3 surface $X_t$. Then, $\{L_{i,t}\}_{i=1}^k$ forms a subspace of $\Pic(X)$. Since $f$ is hyperbolic, there exists a real number $\lambda>1$ and $\alpha_1,\ldots,\alpha_k$ such that $\alpha_1 L_{1,K}+\cdots+\alpha_k L_{k,K}$ is an eigenvector of $f^*_K$ with eigenvalue $\lambda$. Then, its specialization $\alpha_1 L_{1,t}+\cdots+\alpha_k L_{k,t}$ is also an eigenvector of $f^*_t$ with eigenvalue $\lambda$. By classification of automorphisms over $X_t$, $f$ must be hyperbolic if there is an eigenvalue of $f^*$ whose absolute value is greater than $1$. 

Fix $0<r<1$ so small that $f_t$ is hyperbolic for any $t\in\bar{\bD}_r$. Then, as we see in the previous subsection, we can take both its non-archimedean and hybrid analytification. Especially for the hybrid analytification, there exists a proper surjection $\pi_{\hyb}X^{\hyb}\to\cC_{\hyb}(r)$, a homeomorphism $\tau:\cC\to\bar{\bD}_r$, and a family of homeomorphisms $\{\psi_t\}$ with
\begin{align*}
    \psi_t:\pi_{\hyb}^{-1}\{\tau(t)\}\to \begin{cases}
        X_t &\text{ for }t\neq 0, \\
        X_{\bC((t))}^{\an} &\text{ for }t=0.
    \end{cases}
\end{align*}

We apply Proposition \ref{proposition: hybridK3 - canocurrents for cpx K3 surfaces} and Proposition \ref{proposition: hybridK3 - canocurrents for NA K3 surfaces} in our situation. For $t\in\bar{\bD}_r^*$, the hyperbolic automorphism $f_t:X_t\to X_t$ defines the positive closed $(1,1)$-currents $\eta_{\pm,t}$ on $X_t$ such that $f_t^*\eta_{\pm,t}=\lambda^{\pm1}\eta_{\pm,t}$. We remark that in Proposition \ref{proposition: hybridK3 - canocurrents for cpx K3 surfaces}, the currents satisfies equalities for the action $f_*$, but there is no big difference since $f_*=(f^{-1})^*$ as $f$ is an automorphism. For the induced non-archimedean dynamics $f^{\an}_{\bC((t))}:X_{\bC((t))}^{\an}\to X_{\bC((t))}^{\an}$, we also have the positive closed $(1,1)$-currents $\eta_{\pm,0}$ on $X_{\bC((t))}^{\an}$ such that $(f^{\an}_{\bC((t))})^*\eta_{\pm,0}=\lambda^{\pm1}\eta_{\pm,0}$. For any $t\in\bar{bD}_r$, we put $\eta_t:=\eta_{+,t}\wedge\eta_{-,t}$, which is a positive measure on $X_t$ for $t\neq0$ and $X_{\bD((t))}^{\an}$ for $t=0$. We assume that $\eta_t$'s are probability measures by taking a positive scaling if necessary. The main theorem, Theorem \ref{thm: hybridK3 - main theorem - main theorem} states the weak convergence of measures,
\begin{align*}
    \lim_{t\to0}\psi_{t,*}\eta_t=\psi_{0,*}\eta_0.
\end{align*}

Our strategy of the proof is as follows: first, we will present $\eta_t$ as weak limits of Monge-Amp{\`e}re measures of some metric over line bundles with nice properties, details of which are explained in Section \ref{subsection : backgrounds - model functions}. We note that this is the original construction of $\eta_t$ by Cantat \cite{Cantat:2001} in the complex case and Filip \cite{Filip:2019ij} in the non-archimedean case. If we denote the sequences of the Monge-Amp{\`e}re measures $\{\eta_{n,t}\}_n$ converging to $\eta_t$, we first show the weak convergence of the family $\{\psi_{t,*}\eta_{n,t}\}_t$ for each $n$, and then we show the theorem using it. 

We note that when we show the weak convergence of $\{\eta_{n,t}\}_t$ for each $n$, it is enough to show that there exists a family $\cF\subset C^0(X,\bR)$ that is dense with respect to the topology of uniform convergence such that 
\begin{align*}
    \lim_{t\to0}\int_{X^{\hyb}}\Phi\psi_{t,*}\eta_{n,t}=\int_{X^{\hyb}}\Phi\psi_{0,*}\eta_{n,0}
\end{align*}
holds for any $\Phi\in\cF$. For the dense subset $\cF$, we will use the so-called model functions in the sense of Favre \cite{Favre:2020rz}. We will explain it in the next section.

\section{Model functions and Monge-Amp{\`e}re measures}\label{section: hybridK3 - backgrounds}
In this section, we recall the notion of model functions and their properties, which appear in the proof of the main theorem. We mainly refer to \cite{Favre:2020rz}. We remark that we treat more general varieties than K3 surfaces, namely, any analytic family of projective varieties of arbitrary dimension, since results in Section \ref{section: hybridK3 - Weak convergence of mixed Monge-Ampere measures of model metrics} holds for them.

\subsection{Notations}
We prepare several notations. Let $\bD$ be the unit disc of $\bC$, and $\bD^*:=\bD\setminus\{0\}$ be the unit punctured disc. We denote the set of holomorphic functions defined around $0$ by $\cO_{\bD,0}$, that of holomorphic functions on $\bD$ by $\cO(\bD)$. If we denote the Laurent series field over $\bC$ by $\bC((t))$, both $\cO_{\bD, 0}$ and $\cO(\bD)$ can be regarded as subring of $\bC((t))$ naturally.

We consider an analytic family of smooth projective varieties $X$ over $\bD^*$. That is, $X$ is a closed subvariety of $\bP^N\times\bD^*$ such that the defining equation of $X$ have coefficients in $\cO(\bD)$, and for each $t\in\bD^*$, the variety $X_t$ is smooth, where $X_t=\pi^{-1}\{t\}$ with $\pi:X\to\bD^*$ the restriction of the second projection $\bP^N\times\bD^*\to\bD^*$. We fix an snc model $\cX$ with relatively very ample line bundle $\cL$ and its Fubini-Study metric $|\cdot|_*$. That is, there exists a closed immersion $\cX\to\bP^N\times\bD$ so that $\cL$ is the restriction of $\cO(1)\times\bD$, $\cX_0=\pi_{\cX}^{-1}\{0\}$ is a simple normal crossing divisor of $\cX$ where $\pi_{\cX}:\cX\to\bD$ is the restriction of the second projection $\bP^N\times\bD\to\bD$, and there is an isomorphism $\pi_{\cX}^{-1}(\bD^*)\simeq X$. 

We fix a real number $0<r<1$. Let $\overline{\bD}_r$ to be the closed disc centered at origin with radius $r$ and $\bar{\bD}_r^*:=\bar{\bD}_r\setminus\{0\}$. Define
\begin{align*}
    &\bar{X}_r:=\pi^{-1}(\bar{\bD}_r^*)=\bigcup_{0<|t|\leq r}X_t, \text{ and } \\
    &\bar{\cX}_r:=\pi_{\cX}^{-1}(\bar{\bD}_r)=\bigcup_{0<|t|\leq r}X_t\cup\cX_0.
\end{align*}

\subsection{Model functions} \label{subsection : backgrounds - model functions}
A {\it regular admissible datum} $\cF$ consists of
\begin{itemize}
    \item a positive integer $d\in\bN$;
    \item a vertical fractional ideal sheaf $\fA$ in $\cX$ such that $t^N\fA\subset \cO_{\cX}$ for large $N$;
    \item a finite set of meromorphic sections $\tau_0,\ldots\tau_l$ of $\cL^d$ defined in a neighborhood of $\bar{\cX}_r$ generating $\fA$;
    \item a log resolution $p:\cX'\to\cX$ of $\fA$.
\end{itemize}
For such a model $\cF$, we can attach the so-called {\it model function}. It is a continuous real-valued function on the hybrid space $X^{\hyb}$. We construct the model functions on both $\bar{X}_r$ and $X_{\bC((t))}^{\an}$ first and then define the model function on $X^{\hyb}$ by using the homeomorphism $\psi$.

we define the (archimedean) model function $\varphi_{\cF}:\bar{X}_r\to\bR_{\geq0}$ by
\begin{align*}
    \varphi_{\cF}(x):=\log\max\{|\tau_0|_*,\ldots,|\tau_l|_*\}.
\end{align*}

On the other hand, the family of varieties $X$ induces a projective variety $X_{\bC((t))}$ over $\bC((t))$ just by regarding the defining equations as elements of $\bC((t))$ via the Laurent expansion. By taking the Berkovich analytification, we have an analytic variety $X_{\bC((t))}^{\an}$. 

We define the model function $g_{\cF}$ attached to the model $\cF$ on the non-archimedean space $X_{\bC((t))}^{\an}$ using local generator of $\fA$.

Since $X_{A_r}$ is projective, there is a closed embedding $X_{A_r}\to\bP^N_{A_r}$. With a coordinate $[X_1:\cdots:X_N]$ of $\bP^N_{A_r}$, we define an affine open subset $U_i$ by the intersection of $X_r$ and $\{X_i\neq0\}$ in $\bP^N_{A_r}$. By Lemma 2.6 of \cite{Favre:2020rz}, there exist $f^{(i)},g_1^{(i)},\ldots,g_{l(i)}^{(i)}\in A_r$ such that $f^{(i)}\cdot\fA(U_i)$ is generated by $g_1^{(i)},\ldots,g_{l(i)}^{(i)}$. Then, for any $x\in U_i^{\hyb}$, define
\begin{align*}
    \log|\fA|(x)=\inf\{\log|g_j^{(i)}(x)|,j=1,\ldots,l\}-\log|f^{(i)}(x)|,
\end{align*}
where $|g_j^{(i)}(x)|$ and $|f^{(i)}(x)|$ denote the seminorms evaluated by $x$. This definition does not depend on the choice of generators or the embedding $X_{A_r}\to\bP^N_{A_r}$, so that $\log|\fA|$ defines a real-valued continuous function on $X^{\an}_{\bC((t))}$ by restricting it to $\pi^{-1}\{\tau(0)\}$. We denote this function by $g_{\cF}$. 

With the two model functions attached to $\cF$, we define the model function $\Phi_{\cF}$ on the so-called hybrid spaces, denoted by $X^{\hyb}$. Define the model function $\Phi_{\cF}:X^{\hyb}\to\bR$ by
\begin{align*}
    \Phi_{\cF}(x):=
    \begin{cases}
        \eta(x)\cdot \phi_{\cF}(\psi^{-1}(x)) &\text{ if }x\in\pi_{\hyb}^{-1}(\tau(\bar{\bD}^*_r)),\\
        g_{\cF}(x) &\text{ otherwise}.
    \end{cases}
\end{align*}
By Theorem 2.10 of \cite{Favre:2020rz}, $\Phi_{\cF}$ is continuous.

An important fact for model functions is that they form a dense subset of all the continuous functions on the hybrid space $X^{\hyb}$. Namely, the following result is shown by Favre:
\begin{prop}[\cite{Favre:2020rz}, Theorem 2.12]\label{proposition; hybridK3 - backgrounds - model functions - density}
    Let $\cD(X^{\hyb})$ be the space of all the functions of the form $q\Phi_{\cF}-q'\Phi_{\cF'}$ where $\cF$ and $\cF'$ are regular admissible data and $q$ and $q'$ are positive rational numbers so that $q'\deg(\cF)=q\deg(\cF')$. Then, $\cD(X^{\hyb})$ is a $\bQ$-vector space that is dense in the space of real-valued continuous functions on $X^{\hyb}$ with the topology of the uniform convergence.
\end{prop}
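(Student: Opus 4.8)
The plan is to deduce the statement from the lattice version of the Stone--Weierstrass theorem (Kakutani--Krein): since $X^{\hyb}$ is compact and Hausdorff (being proper over $\cC_{\hyb}(r)\simeq\bar\bD_r$ with compact fibres), a linear subspace $V\subset C^0(X^{\hyb},\bR)$ is uniformly dense as soon as it is a sublattice, separates points, and contains the constants. I would verify these three properties for $\cD(X^{\hyb})$ in turn, the whole difficulty being to realize the lattice and interpolation operations by genuine regular admissible data, compatibly across the archimedean fibres and the non-archimedean central fibre.

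The algebraic core is a dictionary between operations on admissible data and operations on model functions. Writing $\mathcal{P}=\{q\,\Phi_{\cF}: q\in\bQ_{>0},\ \cF\text{ regular admissible}\}$ for the positive cone, I would check: (i) the product of ideal sheaves $\fA\cdot\fA'$, realized by the tensor sections $\tau_i\otimes\tau'_j$ of $\cL^{d+d'}$, yields $\varphi_{\cF\otimes\cF'}=\varphi_{\cF}+\varphi_{\cF'}$ and likewise $g_{\cF\otimes\cF'}=g_{\cF}+g_{\cF'}$, so $\mathcal{P}$ is closed under addition after matching degrees; (ii) the $n$-th power ideal $\fA^{n}$ scales the model function by $n$, which together with the degree-matching condition $q'\deg(\cF)=q\deg(\cF')$ makes $\cD(X^{\hyb})=\mathcal{P}-\mathcal{P}$ a well-defined $\bQ$-vector space; and (iii) the sum of ideals $\fA+\fA'$ (at a common degree) satisfies $\log|\fA+\fA'|=\max(\log|\fA|,\log|\fA'|)$ both archimedeanly (as $\log\max$ over the union of generators) and non-archimedeanly, so that its model function is $\max(\Phi_{\cF},\Phi_{\cF'})$. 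Each combined datum admits a log resolution since ideals can always be log-resolved, hence is again regular admissible. A short computation then shows $\max(f,g)\in\mathcal{P}-\mathcal{P}$ for $f,g\in\mathcal{P}-\mathcal{P}$ (write $\max(p_1-p_2,q_1-q_2)=\max(p_1+q_2,q_1+p_2)-(p_2+q_2)$ and use (i),(iii)), so $\cD(X^{\hyb})$ is a sublattice.

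For the constants I would use the twist by powers of $t$: comparing the datum with ideal $t^{-1}\fA_0$ to the one with ideal $\fA_0$ gives, on $\bar X_r$, the difference $\eta(x)\cdot(-\log|t|)=-\log r$ (since $\eta(x)=\log r/\log|t|$), and on the central fibre $-\log|t|=-\log r$ as well (every point of $X^{\an}_{\bC((t))}$ restricts to $|t|=r$); thus the nonzero constant $-\log r$ lies in $\cD(X^{\hyb})$, whence all rational and, by closure, all real constants. Separation of points splits into three cases: two points in the same archimedean fibre $X_t$ are separated by some $|\tau(\cdot)|_*$ because $\cL$ is relatively very ample and its sections separate points of $\bar\cX_r$; two points of the central fibre $X^{\an}_{\bC((t))}$ are separated by some $g_{\cF}$, since distinct multiplicative seminorms differ on a rational function and the divisorial points $x_E$ are separated by the orders $\ord_E$; and points in distinct fibres are separated because the values $|\tau_i(\cdot)|_*$ genuinely depend on the fibre. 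All of this is tied together by the continuity of $\Phi_{\cF}$ on $X^{\hyb}$ (Theorem 2.10 of \cite{Favre:2020rz}).

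The main obstacle I expect is precisely the interface at $t=0$: one must check that the ideal-theoretic identities $\Phi_{\cF\otimes\cF'}=\Phi_{\cF}+\Phi_{\cF'}$ and the max-identity of (iii) hold as equalities of functions on all of $X^{\hyb}$, including across the central fibre, and not merely fibrewise. This requires the compatibility of the archimedean normalization $\eta(x)\,\varphi_{\cF}(\psi^{-1}(x))$ with the non-archimedean value $g_{\cF}(x)$ as $\eta(x)\to0$, which is exactly what the continuity statement encodes; combined with the existence of log resolutions of $\fA+\fA'$ and $\fA\cdot\fA'$, it guarantees that the combined data are regular admissible and that their model functions take the expected tropical form everywhere. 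Once the sublattice, separation, and constants properties are secured, the Kakutani--Krein theorem yields the density of $\cD(X^{\hyb})$ in $C^0(X^{\hyb},\bR)$.
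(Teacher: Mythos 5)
The paper does not actually prove this proposition: it is imported verbatim from Favre (Theorem 2.12 of \cite{Favre:2020rz}), so there is no in-paper proof to compare against. Your proposal essentially reconstructs Favre's original argument, and it is correct in outline: compactness of $X^{\hyb}$, the Kakutani--Krein lattice form of Stone--Weierstrass, additivity of model functions under products of ideals (with degrees adding), max-stability under sums of ideals at a common degree (reached via powers $\fA^{d'}$, $\fA'^{d}$), the identity $\max(p_1-p_2,q_1-q_2)=\max(p_1+q_2,q_1+p_2)-(p_2+q_2)$ to get a sublattice, and the twist by $t^{-1}$ producing the constant $-\log r$ uniformly across archimedean fibres and the central fibre (where indeed $|t|=r$ for every point). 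Passing to the uniform closure, which is an $\bR$-subspace stable under $\max$, handles both the $\bQ$-coefficients and real constants.

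Two wrinkles deserve attention. First, your max-identity $\log|\fA+\fA'|=\max(\log|\fA|,\log|\fA'|)$ on the central fibre requires the non-archimedean evaluation of an ideal to be the \emph{supremum} over generators; the present paper writes $\log|\fA|(x)=\inf_j\log|g_j^{(i)}(x)|-\log|f^{(i)}(x)|$, which is inconsistent with the archimedean $\log\max$ and is presumably a typo for $\sup$ (Favre's convention) --- under the $\inf$ convention your step (iii) would fail, so you are right to flag this interface, but the fix is to the paper's formula, not your argument. Second, your separation step is stated too quickly: a single section $\tau$ does not yield an admissible datum, since the ideal it generates must be \emph{vertical} (co-supported on $\cX_0$), so one cannot separate two points of $X_t$ by ``some $|\tau(\cdot)|_*$'' alone. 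One instead separates points of a fibre by differences of functions of the form $\log\max_i|\sigma_i|_*$ for varying bases of sections of the relatively very ample $\cL^d$ (each such system has no common zeros, hence generates the trivial, vertical ideal), using that the induced map to $\bP^N$ is injective and that tropicalized linear systems separate points of $\bP^N$; a similar remark applies to divisorial and general points of $X^{\an}_{\bC((t))}$. These are repairable details rather than gaps in the strategy, and with them your proof matches the one Favre gives for the cited theorem.
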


\subsection{Monge-Amp{\`e}re measures}
Let $\pi: X\to\bD^*$ and $\pi_{\cX}:\cX\to\bD$ be as above, i.e., $X$ is an analytic family of projective varieties of dimension $n$ over $\bD^*$ and $\cX$ is its snc model. 

We consider ample line bundles $L_1,\ldots, L_n$ and its semiample model $\cL_1,\ldots,\cL_n$. We see in Section \ref{subsection: hybridK3 - backgrounds - Berkovich geomety} that $X$ and $L_i$ induces a Berkovich variety $X^{\an}_{\bC((t))}$ and its line bundle $L^{\an}_{i,\bC((t))}$ respectively. In this subsection, we define the mixed Monge-Amp{\`e}re measures both on $\cX$ and $X_{\bC((t))}^{\an}$

First, we consider the complex analytic case. Let $h_1,\ldots,h_n$ be smooth metrics on $\cL_1,\ldots,\cL_n$ respectively. Each $h_i$ defines a curvature form denoted by $dd^c \log h_i$, which is locally defined by $dd^c \log h_i\circ s$ for any local section $s$ of $L_i$, respectively. Here, we write
\begin{align*}
    d^c=\frac{1}{i\pi}\left(\partial-\bar{\partial}\right),
\end{align*}
folllowing Demaily \cite{Demailly:1993if}.

We note that curvature forms are defined for smooth metrics, but it extends to continuous psh metrics by means of currents, thanks to Bedford and Taylor \cite{Bedford:1976qd}. That is, if we assume $h_i$'s are continuous psh metrics, i.e., $\log h_i\circ s$ is psh for any local section $s$ of $L_i$, the local Laplacian $dd^c \log h_i\circ s$ still makes sense if we regard $\log h_i\circ s$ as a current, and it is independent of the choice of $s$. That is, they can be patched together to define a global current $dd^c \log h_i$. It is then a closed positive (1,1)-current on $X$. Moreover, since they have locally continuous potentials, we take their wedge product to define a $(n,n)$-form $dd^c \log h_1 \wedge \cdots\wedge dd^c\log h_n$.

For any $t \in \bD$, define  $[X_t]:=dd^c\log|\pi_{\cX}-t|$. For a continuous psh metric $h_1,\ldots,h_n$ on $\cL_1,\ldots,\cL_n$ respectively, we define the mixed Monge-Amp{\`e}re measure associated to $h_1\ldots,h_n$ at $t\in\bD$ by
\begin{align*}
    \MA_t(h_1,\ldots,h_n):=dd^c \log h_1 \wedge\cdots\wedge dd^c \log h_n \wedge [X_t].
\end{align*}
We note that for $t\in\bD^*$, $dd^c\log|\pi_{\cX}-t|$ is the current of integration over $\pi^{-1}\{t\}$, while for $t=0$,
\begin{align*}
    [X_0]=\sum_{E\subset \cX_0}b_E[E],
\end{align*}
where $E$ ranges over all the irreducible components of $\cX_0$ and $b_E$ is the order of zeros of $\pi^*t$ at the generic point of $E$. 

\begin{prop}[\cite{Demailly:1993if}, Corollary 1.6]\label{proposition: backgrounds - continuity of MA measures}
    Let $U\in X$ be a domain and $\{u_1^k\}_{k=1}^{\infty},\ldots,\{u_n^k\}_{k=1}^{\infty}$ be sequences of continuous psh functions converging locally uniformly to $u_1,\ldots,u_n$ respectively. Then, we have a weak convergence of measures
    \begin{align*}
        \lim_{k\to\infty}dd^c u_1^k\wedge\cdots\wedge dd^cu_n^k=dd^c u_1\wedge\cdots\wedge dd^cu_n.
    \end{align*}
\end{prop}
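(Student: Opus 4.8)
The plan is to argue by induction on $n$, relying on the inductive definition of the Bedford--Taylor mixed Monge--Amp\`ere product and on uniform mass bounds of Chern--Levine--Nirenberg type. Recall that for continuous (more generally locally bounded) psh functions $v_1,\dots,v_p$ the current $dd^c v_1\wedge\dots\wedge dd^c v_p$ is defined inductively by
\begin{align*}
    dd^c v_1\wedge\dots\wedge dd^c v_p:=dd^c\bigl(v_1\,dd^c v_2\wedge\dots\wedge dd^c v_p\bigr),
\end{align*}
and is a closed positive $(p,p)$-current. For $n=1$, locally uniform convergence $u_1^k\to u_1$ forces convergence in $L^1_{\mathrm{loc}}$, and since $dd^c$ is continuous for the weak topology of currents we get $dd^c u_1^k\to dd^c u_1$ weakly; this is the base case.

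For the inductive step, put $T_k:=dd^c u_2^k\wedge\dots\wedge dd^c u_n^k$ and $T:=dd^c u_2\wedge\dots\wedge dd^c u_n$, so that $T_k\to T$ weakly as positive closed $(n-1,n-1)$-currents by the induction hypothesis. Fix a smooth compactly supported test form $\chi$ of the complementary bidegree. Integrating by parts, the pairing reads
\begin{align*}
    \langle dd^c u_1^k\wedge T_k,\chi\rangle=\langle u_1^k\,T_k,dd^c\chi\rangle,
\end{align*}
so everything reduces to showing $\langle u_1^k\,T_k,dd^c\chi\rangle\to\langle u_1\,T,dd^c\chi\rangle$. Splitting the difference,
\begin{align*}
    \langle u_1^k\,T_k,dd^c\chi\rangle-\langle u_1\,T,dd^c\chi\rangle=\langle(u_1^k-u_1)\,T_k,dd^c\chi\rangle+\langle u_1\,(T_k-T),dd^c\chi\rangle.
\end{align*}

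The first term is handled by a uniform bound: $\|u_1^k-u_1\|_{L^\infty(\mathrm{supp}\,\chi)}\to0$, while the mass of the positive current $T_k$ on $\mathrm{supp}\,\chi$ is uniformly bounded in $k$ by the Chern--Levine--Nirenberg inequality, since the sup-norms $\|u_i^k\|_{L^\infty(L)}$ on a compact neighborhood $L$ stay bounded (each $u_i^k$ converges uniformly). For the second term, $u_1$ is a fixed continuous function, so $u_1\,dd^c\chi$ is a fixed continuous compactly supported form; the weak convergence $T_k\to T$ then extends from smooth to continuous test forms by a standard regularization (approximate $u_1\,dd^c\chi$ uniformly by smooth forms and control the errors by the same uniform mass bound), and the term tends to $0$. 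This closes the induction, and the weak limit is a measure precisely when the product is of top degree.

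The main obstacle is the first term, in which a uniformly small but non-smooth factor $u_1^k-u_1$ is paired against currents $T_k$ that converge only weakly: weak convergence of measures is not stable under multiplication by a varying function, and what rescues the argument is exactly the uniform Chern--Levine--Nirenberg mass bound together with the positivity of $T_k$. Establishing these uniform bounds along the whole sequence, and upgrading weak convergence of positive currents to continuous test forms, are the two technical points carrying the proof; both are classical but are the crux here.
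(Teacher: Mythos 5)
Your proof is correct, and it coincides with the intended argument: the paper offers no proof of this proposition at all---it is quoted directly from Demailly \cite{Demailly:1993if}, Corollary 1.6---and your induction on the number of factors, the integration by parts $\langle dd^c u_1^k\wedge T_k,\chi\rangle=\langle u_1^k T_k,dd^c\chi\rangle$, the telescoping into $\langle(u_1^k-u_1)T_k,dd^c\chi\rangle+\langle u_1(T_k-T),dd^c\chi\rangle$, and the uniform Chern--Levine--Nirenberg mass bounds are exactly the classical Bedford--Taylor/Demailly proof of that cited result. The two technical points you single out (uniform local masses of the positive currents $T_k$ along the sequence, and upgrading weak convergence of positive currents to continuous compactly supported test forms via smoothing plus the same mass bound) are handled correctly, so nothing is missing.
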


On the other hand, we have a similar theory of currents and psh functions over non-archimedean varieties due to Chambert-Loir and Ducros \cite{chambertloir2012formes}. Since we only use the mixed Monge-Amp{\`e}ere measure for model metrics in the later discussion, we give the exact formula for them as a definition.

We consider models $\cL_1,\ldots,\cL_n$ of $L_1,\ldots,L_n$ over some snc model $\cX$ respectively. Each model $\cL_i$ defines a metric on $L_{i,\bC((t))}^{\an}$, which is called the model metric $g_{\cL_i}$ attached to $\cL_i$. We will not give a definition here and refer to Section 5.3 of \cite{BoucksomEriksson2021}. For such metrics, the mixed Monge-Amp{\`e}ere measure, denoted by $\MA_{\NA}(g_{\cL_1},\ldots,g_{\cL_n})$, is defined as an atomic measure
\begin{align*}
    \MA_{\NA}(g_{\cL_1},\ldots,g_{\cL_n})=\sum_{E\subset\cX_0}(\cL_1|_E\cdot\cdots\cdot\cL_n|_E)\delta_{x_E},
\end{align*}
where $E$ runs over all the irreducible components of $\cX_0$ and $\delta_{x_E}$ denotes a Dirac mass at the divisorial point $x_E$ corresponding to $E$.


For both archimedean and non-archimedean mixed Monge Amp{\`e}re measures, the following continuity is known \cite{BoucksomEriksson2021}:
\begin{prop}[\cite{BoucksomEriksson2021},Theorem 8.4]
    Let $h_i$ be a metric on $\cL_i$ and $g_i$ be a metric on $L_{\bC((t))}^{\an}$. we assume that there exists sequences of metrics $\{h_i^{(j)}\}_{j=1}^{\infty}$ on $\cL_i$ and $\{g_i^{(j)}\}_{j=1}^{\infty}$ on $L_{i,\bC((t))}^{\an}$ such that
    \begin{itemize}
        \item each $g_i^{(j)}$ is a model metric for any $i$ and $j$;
        \item $h_i^{(j)}\to h_i$ and $g_i^{(j)}\to g_i$ respectively as $j\to\infty$, and all the convergence is uniform.
    \end{itemize}
    Then, $\MA_{\NA}(g_1,\ldots,g_n)$ is well-defined as a positive metric on $X_{\bC((t))}^{\an}$ and we have the following weak convergence of measures:
    \begin{align*}
        &\lim_{j\to\infty}\MA_t(h_1^{(j)},\ldots,h_n^{(j)})=\MA_t(h_1,\ldots,h_n);\\
        &\lim_{j\to\infty}\MA_{\NA}(g_1^{(j)},\ldots,g_n^{(j)})=\MA_{\NA}(g_1,\ldots,g_n)
    \end{align*}
\end{prop}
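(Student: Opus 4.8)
The plan is to prove the two weak convergences separately, deriving the well-definedness of $\MA_{\NA}(g_1,\ldots,g_n)$ as a byproduct of the non-archimedean one, and to reduce the archimedean statement to the Bedford--Taylor continuity already recorded in Proposition~\ref{proposition: backgrounds - continuity of MA measures}. For the archimedean convergence, a uniform convergence $h_i^{(j)}\to h_i$ of metrics on $\cL_i$ is, in any local trivialisation, a uniform (hence locally uniform) convergence of the psh potentials $-\log(h_i^{(j)}\circ s)$. When $t\neq0$ the measure $\MA_t$ is the Bedford--Taylor product on the smooth fibre $X_t$, so Proposition~\ref{proposition: backgrounds - continuity of MA measures} applies verbatim. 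When $t=0$ I would use $[X_0]=\sum_E b_E[E]$, restrict each potential to the component $E$ (smooth, since $\cX_0$ is snc), and apply the same proposition on $E$; uniform convergence is preserved under restriction, so the convergence on $\bar{\cX}_r$ follows componentwise.

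The heart of the argument is the non-archimedean convergence, which I would establish by a Cauchy argument against a dense family of test functions, simultaneously defining the limiting measure. First, each $\MA_{\NA}(g_1^{(j)},\ldots,g_n^{(j)})$ is a positive measure of total mass equal to the intersection number $(L_1\cdots L_n)$ of the generic fibre, in particular bounded uniformly in $j$ and independent of the chosen snc model; by compactness of $X^{\an}_{\bC((t))}$ the family is thus weak-$*$ precompact. By density of model functions in $C^0(X^{\an}_{\bC((t))})$ --- the non-archimedean counterpart of Proposition~\ref{proposition; hybridK3 - backgrounds - model functions - density}, obtained by restricting hybrid model functions to the central fibre --- together with the uniform mass bound, it suffices to prove that $\int\phi\,\MA_{\NA}(g_1^{(j)},\ldots,g_n^{(j)})$ is Cauchy in $j$ for every model function $\phi=\phi_{\cD}$. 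For such $\phi$ the integration-by-parts formula of Chambert-Loir--Ducros expresses the difference
\begin{align*}
    \int_{X^{\an}_{\bC((t))}}\phi\,\bigl(\MA_{\NA}(g_1^{(j)},\ldots,g_n^{(j)})-\MA_{\NA}(g_1^{(j')},\ldots,g_n^{(j')})\bigr)
\end{align*}
as a telescoping sum of terms $\int(g_i^{(j)}-g_i^{(j')})\,dd^c\phi\wedge(\cdots)$, each dominated by $\|g_i^{(j)}-g_i^{(j')}\|_\infty$ times the total variation of a mixed current whose mass is controlled by the fixed numerical classes $c_1(L_i)$ and $\cD$. As each sequence $\{g_i^{(j)}\}_j$ is uniformly Cauchy, the displayed quantity tends to $0$, so the integrals converge.

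I would then define $\MA_{\NA}(g_1,\ldots,g_n)$ to be the resulting weak-$*$ limit; positivity passes to the limit, and independence of the approximating sequences follows by interleaving two admissible model sequences into a third one that still converges uniformly to the $g_i$, whence the same Cauchy estimate forces their limits to agree. The main obstacle is precisely the uniform control in this Cauchy estimate: one must bound the total variation of the signed mixed currents $dd^c\phi\wedge dd^c g_2^{(j)}\wedge\cdots$ uniformly in $j$, even though the models $\cL_i^{(j)}$ underlying $g_i^{(j)}$ may live on ever higher snc models of $X$. This is where the intersection-theoretic (projection-formula) character of the non-archimedean Monge--Amp\`ere operator is indispensable, and where one crucially uses that the numerical classes of the $\cL_i^{(j)}$ remain fixed as $j$ varies.
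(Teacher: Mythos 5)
This proposition is not proved in the paper at all: it is imported wholesale from \cite{BoucksomEriksson2021}, and the paper's only ``argument'' is the remark following the statement, namely that Theorem 8.4 of \cite{BoucksomEriksson2021} proves the stronger continuity of the mixed Monge--Amp\`ere operator on psh-regularizable metrics, and that uniform limits of model metrics are psh-regularizable by Theorem 7.14 and Definition 7.2 there. So your proposal cannot be compared to an in-paper proof; what it amounts to is a from-scratch reconstruction of the underlying result, and as such it follows the standard route: Bedford--Taylor continuity (Proposition \ref{proposition: backgrounds - continuity of MA measures}, applied fibrewise, and componentwise on $\cX_0$ for $t=0$, which works since wedging with $[E]$ agrees with restriction of continuous psh potentials to the smooth components $E$) for the archimedean half, and for the non-archimedean half a Cauchy argument against model functions with Chambert-Loir--Ducros integration by parts, essentially the scheme of Corollaire 5.6.5 of \cite{chambertloir2012formes}. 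That is a legitimate and genuinely more self-contained route than the paper's citation, and your mechanism for well-definedness (interleaving two approximating sequences) is correct.

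The one genuine soft spot is the step you yourself flag: the uniform-in-$j$ bound on the mass of the mixed currents $dd^c\phi\wedge dd^c g_2^{(j)}\wedge\cdots$ when the models $\cL_i^{(j)}$ live on ever higher snc models. Saying ``the numerical classes remain fixed'' is too loose --- only the restrictions to the generic fibre are fixed, while the vertical parts vary with $j$. The estimate can be closed as follows: a model function $\phi$ is determined by a vertical $\bQ$-Cartier divisor $D$ on some fixed model, and one can choose $C>0$ with $-C\cX_0\le D\le C\cX_0$ as divisors; pulling back to a model dominating all models in play, nefness of the semiample $\cL_i^{(j)}$ makes intersection against effective vertical cycles nonnegative, so
\begin{align*}
\left|\left(p^*D\cdot\cL_2^{(j)}\cdots\cL_n^{(j)}\right)\right|\le C\left(\cX_0\cdot\cL_2^{(j)}\cdots\cL_n^{(j)}\right)=C\,(L_2\cdots L_n),
\end{align*}
the last equality by flatness/projection formula, and the right-hand side is independent of $j$. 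With that inserted, your telescoping Cauchy estimate is complete. Two minor points: your archimedean argument tacitly assumes the $h_i^{(j)}$ are continuous psh metrics (the statement as written in the paper does not say so, but this is the setting of \cite{BoucksomEriksson2021}, so the assumption is the right one to make explicit); and your total-mass claim $\sum_E b_E(\cL_1|_E\cdots\cL_n|_E)=(L_1\cdots L_n)$ uses the multiplicities $b_E$, which the paper's displayed formula for $\MA_{\NA}$ omits --- that is an inaccuracy in the paper, not in your proposal.
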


We remark that Theorem 8.4 of \cite{BoucksomEriksson2021} states stronger result than what we present in the non-archimedean case. Namely, they show that the mixed Monge-Amp{\`e}re operator is extended continuously to the set of continuous psh-regularizable metrics. We do not present details about such metrics, but we remark that, by Theorem 7.14 and Definition 7.2 of \cite{BoucksomEriksson2021}, metrics which can be written as a uniform limit of model metrics are psh-regularizable.

\section{Weak convergence of mixed Monge-Amp\`ere measures of model metrics} \label{section: hybridK3 - Weak convergence of mixed Monge-Ampere measures of model metrics}

In this section, we study the weak convergence of mixed Monge-Amp\`ere measures defined from the model metrics of ample line bundles. It is a non-dynamical convergence result similar to one in \cite{Favre:2020rz} and \cite{BHJ1}.

This section considers an analytic family $X\to\bD^*$ of smooth projective varieties of dimension $n$. Let $L_1,\ldots, L_n$ be ample line bundles on $X$. We study the mixed Monge-Amp\`ere measure around the origin.

To study it, we fix an snc model $\cX$ of $X$, a relatively very ample line bundle $\cL$ on $\cX$ with a fixed metric $h$, and semiample models $\cL_1,\ldots,\cL_n$ of $L_1,\ldots,L_n$ respectively as in Section \ref{section: hybridK3 - backgrounds}. We then consider following metrics: a smooth metric $h_i$ of $\cL_i$ and the model metric $g_i$ on $L_{i,\bC((t))}^{\an}$ defined by the model $\cL_i$.

As in the previous section, the mixed Monge-Amp\`ere measure of $(\cL_1,h_1),\ldots,(\cL_n,h_n)$ on $X_t$ for $t\in\bD^*$ is denoted by $\MA_t(h_1,\ldots,h_n)$, and the non-archimedean version of it is denoted by $\MA_{\NA}(g_1,\ldots,g_n)$. We discuss the relation between these two Monge-Amp\`ere measures over the hybrid space. As before, fix a relatively very ample line bundle $\cL$ and let $X^{\hyb}$ be the hybrid analytification of $X$ and $\psi_t:X_t^{\hyb}\to X_t$ and $\psi_0:X_0^{\hyb}\to X_{\bC((t))}^{\an}$ be the natural homeomorphisms.

The following Proposition is a generalization of Theorem 3.4 (and 3.5) of \cite{Favre:2020rz}, where he studies weak convergence of (non-mixed) Monge-Amp\`ere measures attached to model functions.

\begin{prop}\label{prop: hybridK3-weakconvergence-modelmetrics}
    In the above notation, we have the following weak convergence:
    \begin{align*}
        \lim_{t\to0}\psi_{t,*}(\MA_t(h_1,\ldots,h_n))=\psi_{0,*}\MA_{\NA}(g_1,\ldots,g_n).
    \end{align*}
\end{prop}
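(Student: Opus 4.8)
The plan is to reduce the weak convergence to a statement about model functions, where both Favre's density result (Proposition \ref{proposition; hybridK3 - backgrounds - model functions - density}) and the explicit archimedean/non-archimedean descriptions of the Monge-Ampère measures are available. Concretely, I would first test the measures against the dense family $\cD(X^{\hyb})$: by Proposition \ref{proposition; hybridK3 - backgrounds - model functions - density}, it suffices to prove
\begin{align*}
    \lim_{t\to0}\int_{X^{\hyb}}\Phi_{\cF}\,\psi_{t,*}\bigl(\MA_t(h_1,\ldots,h_n)\bigr)=\int_{X^{\hyb}}\Phi_{\cF}\,\psi_{0,*}\MA_{\NA}(g_1,\ldots,g_n)
\end{align*}
for every model function $\Phi_{\cF}$ attached to a regular admissible datum $\cF$. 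Unwinding the definition of $\Phi_{\cF}$ and the homeomorphisms $\psi_t$, the left-hand integrand for $t\neq0$ becomes $\eta(\,\cdot\,)\,\varphi_{\cF}$ integrated against the honest complex Monge-Ampère measure on $X_t$, and the right-hand side is a finite sum over the components $E\subset\cX_0$ of $(\cL_1|_E\cdots\cL_n|_E)\,g_{\cF}(x_E)$.

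**The geometric heart** of the argument is to control how the mass of $\MA_t(h_1,\ldots,h_n)$ distributes over $\cX$ as $t\to0$. Here I would pass from the punctured-disc picture to the snc model $\cX$ and use the current-theoretic identity $\MA_t(h_1,\ldots,h_n)=dd^c\log h_1\wedge\cdots\wedge dd^c\log h_n\wedge[X_t]$. As $t\to0$, the slicing current $[X_t]$ converges to $[X_0]=\sum_E b_E[E]$, so one expects the limiting mass on each component $E$ to be governed by the intersection numbers $b_E(\cL_1|_E\cdots\cL_n|_E)$, matching (up to the normalization $b_E$ absorbed by $\eta$ near the divisorial point $x_E$) the atomic non-archimedean measure. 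To make this rigorous I would combine the Bedford–Taylor continuity of the mixed Monge-Ampère operator (Proposition \ref{proposition: backgrounds - continuity of MA measures}) with the fact that, under $\psi_t$, points of $X_t$ lying in a fixed tubular neighborhood of $E$ are sent near the divisorial point $x_E$ in $X^{\hyb}$; the weight $\eta(\psi_t(z))=\log r/\log|t|$ degenerates in a way that, combined with the local equations $\ord_E(t)=b_E$, reproduces the evaluation $g_{\cF}(x_E)=\log|\fA|(x_E)$.

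**The main obstacle** I anticipate is the potential mass escaping to the lower-dimensional strata of $\cX_0$ (the intersections of several components $E\cap E'$) and, relatedly, proving a uniform estimate that prevents the integral $\int\eta\cdot\varphi_{\cF}\,\MA_t$ from picking up spurious contributions away from the generic points of the $E$'s — this is presumably the content of the uniform estimation referenced as Proposition \ref{prop: hybridK3-weakconvergence-invariantmeasure-uniformestimation} in the acknowledgements. I would handle this by an explicit local computation in snc coordinates $(z_1,\ldots,z_k)$ where $t=\prod z_j^{b_j}$: on such a chart $\log|t|=\sum b_j\log|z_j|$, and one shows that the pushforward measure concentrates, in the limit, on the open part of each $E$ while assigning zero mass to the crossings, so that the continuous function $\Phi_{\cF}$ may be evaluated at the single divisorial point $x_E$. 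Once this concentration and the matching of intersection numbers is established for model functions, density via Proposition \ref{proposition; hybridK3 - backgrounds - model functions - density} upgrades it to weak convergence against all of $C^0(X^{\hyb},\bR)$, completing the proof.
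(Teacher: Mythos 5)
Your proposal follows essentially the same route as the paper's proof: reduce via Favre's density theorem (Proposition \ref{proposition; hybridK3 - backgrounds - model functions - density}) to testing against model functions $\Phi_{\cF}$, use the weak continuity of $t\mapsto\MA_t(h_1,\ldots,h_n)$ together with the continuous extension of $\varphi=\Phi_{\cF}\circ\psi$ to $\cX\setminus K$ with value $g_{\cF}(x_E)$ along each component $E$ (the paper invokes Favre's Lemma 3.6 here, which your snc-coordinate computation would re-derive), and dispose of the contribution near the crossings by observing that $\MA_0(h_1,\ldots,h_n)$ puts no mass on the singular locus $Z$, so $\MA_t(K)<\epsilon$ for $K$ a small compact neighborhood of $Z$ and $t$ small. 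One minor correction: the uniform estimate you cite as Proposition \ref{prop: hybridK3-weakconvergence-invariantmeasure-uniformestimation} concerns the dynamical potentials $G_n$ in the proof of the main theorem and plays no role in this non-dynamical statement.
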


Since the linear sums of model functions are dense in the set of continuous functions on the hybrid space, it is enough to show the following statement:

\begin{prop}\label{prop: hybridK3-weakconvergence-modelmetrics-modelfunctions}
    Let $\mathcal{F}$ be any regular admissible datum. 
    Then, we have
    \begin{align*}
        \lim_{t\to0}\int_{\psi_t(X_t)}\Phi_{\mathcal{F},t}\psi_{t,*}\MA_t(h_1,\ldots,h_n)=\sum_{E\in\cX_0} g_{\mathcal{F}}(x_E)c_1(\cL_1|_E)\wedge\cdots\wedge c_1(\cL_n|_E).
    \end{align*}
\end{prop}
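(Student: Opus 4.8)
The plan is to unwind the left-hand side through the homeomorphism $\psi_t$ and the definition of the model function, reduce it to a single scalar integral on the complex fibres, and then analyse its limit by localising near the central fibre of a log resolution. Since $\psi_t$ is a homeomorphism and $\eta\circ\psi_t\equiv\frac{\log r}{\log|t|}$ on $X_t$, the definition of $\Phi_{\cF}$ gives
\begin{align*}
\int_{\psi_t(X_t)}\Phi_{\cF,t}\,\psi_{t,*}\MA_t(h_1,\ldots,h_n)=\frac{\log r}{\log|t|}\int_{X_t}\varphi_{\cF}\,\MA_t(h_1,\ldots,h_n),
\end{align*}
where $\varphi_{\cF}=\log\max_j|\tau_j|_*$ and $\MA_t=\omega_1\wedge\cdots\wedge\omega_n\wedge[X_t]$ with $\omega_i=dd^c\log h_i$. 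I would then pull everything back along the log resolution $p:\cX'\to\cX$ supplied by $\cF$ (refining it so that $D\cup(\pi_{\cX}\circ p)^{-1}(0)$ is snc), which is an isomorphism over $\bD^*$. Writing $\fA\cdot\cO_{\cX'}=\cO_{\cX'}(-D)$ with $D=D^{\mathrm h}+D^{\mathrm v}$ its horizontal and vertical parts and $\Theta=p^*\omega_1\wedge\cdots\wedge p^*\omega_n$, one has $\varphi_{\cF}\circ p=\log|s_D|_*+u$ for a section $s_D$ cutting out $D$ and a bounded continuous $u$ (the generators of $p^*\fA$ become a principal part times a nowhere-vanishing tuple). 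The integral thus splits into a bounded part, a horizontal part, and a vertical part.

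The first two parts vanish in the limit. The mass $\int_{X'_t}\Theta\wedge[X'_t]$ is constant in $t$, so $\frac{\log r}{\log|t|}\int u\,\Theta\wedge[X'_t]\to0$; and $\int_{X'_t}\log|s_{D^{\mathrm h}}|_*\,\Theta\wedge[X'_t]$ stays bounded as $t\to0$ (a fixed divisor $D^{\mathrm h}$ cut by the moving fibres, with uniformly integrable logarithmic singularity), so multiplying by $\frac{\log r}{\log|t|}\to0$ kills it again. Only the vertical contribution $\frac{\log r}{\log|t|}\int_{X'_t}\log|s_{D^{\mathrm v}}|_*\,\Theta\wedge[X'_t]$ survives. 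Using $[X'_t]\to[\cX'_0]=\sum_F b'_F[F]$ together with Proposition \ref{proposition: backgrounds - continuity of MA measures} and smoothness of $\Theta$, the measures $\Theta\wedge[X'_t]$ converge weakly to $\sum_F b'_F\,\Theta|_F$. Near a generic point of a vertical component $F$, in snc coordinates $\pi_{\cX}\circ p=z^{b'_F}\cdot(\text{unit})$ and $s_{D^{\mathrm v}}=z^{a_F}\cdot(\text{nonvanishing})$, so along $X'_t$ one has $\log|z|\sim (b'_F)^{-1}\log|t|$ and hence
\begin{align*}
\frac{\log r}{\log|t|}\log|s_{D^{\mathrm v}}|_*\longrightarrow\frac{a_F}{b'_F}\log r=g_{\cF}(x_F),
\end{align*}
uniformly on compact subsets of the smooth locus of $\cX'_0$.

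The main obstacle is the behaviour along the crossings of $\cX'_0$ (and along $D^{\mathrm h}\cap\cX'_0$), where the rescaled integrand no longer converges. Here I would establish the uniform estimate that on a neighbourhood of $\cX'_0$ the function $\frac{\log r}{\log|t|}\log|s_{D^{\mathrm v}}|_*$ is bounded independently of $t$: in a crossing chart $\pi_{\cX}\circ p=z_1^{b_1}z_2^{b_2}\cdot(\text{unit})$ and $s_{D^{\mathrm v}}=z_1^{a_1}z_2^{a_2}\cdot(\text{nonvanishing})$, and the constraint $b_1\log|z_1|+b_2\log|z_2|=\log|t|+O(1)$ on $X'_t$ forces the value to lie between $\frac{a_1}{b_1}\log r$ and $\frac{a_2}{b_2}\log r$ up to $o(1)$. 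Since the limit measure $\sum_F b'_F\,\Theta|_F$ charges no mass on the $(n-1)$-dimensional crossing loci, weak convergence (Portmanteau) makes the $\Theta\wedge[X'_t]$-mass of a small neighbourhood of these loci uniformly small, and combined with the uniform bound their contribution is negligible.

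Summing the generic-chart contributions then yields $\sum_F g_{\cF}(x_F)\,b'_F\,(p^*\cL_1|_F\cdots p^*\cL_n|_F)$, which is precisely $\int g_{\cF}\,\MA_{\NA}(g_1,\ldots,g_n)$ computed on the model $\cX'$. By the projection formula the $p$-exceptional components contribute $0$ (a pullback top self-intersection along a contracted divisor vanishes), while the strict transforms satisfy $x_F=x_E$, $b'_F=b_E$ and $p^*\cL_i|_F\leftrightarrow\cL_i|_E$; by the resulting birational invariance of the non-archimedean Monge–Ampère measure this equals the pairing of $g_{\cF}$ with $\MA_{\NA}(g_1,\ldots,g_n)=\sum_{E\subset\cX_0}(\cL_1|_E\cdots\cL_n|_E)\delta_{x_E}$, namely the asserted right-hand side. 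The delicate point throughout is the uniform control near the non-generic strata, exactly the uniform estimate singled out in the acknowledgements, and I expect it to be where the bulk of the work lies.
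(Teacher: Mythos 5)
Your proof is correct, and its skeleton is ultimately the same as the paper's — identify the fibrewise limit of the rescaled potential away from the singular locus of the central fibre, and kill the contribution near that locus by a uniform bound combined with the fact that the limit measure charges no mass there — but you arrive at it by a more self-contained route. Where the paper black-boxes two inputs, you prove both inline: the paper cites Favre's Lemma 3.6 for the continuous extension of $\varphi=(\log r\cdot\phi_{\cF})/\log|t|$ to $\cX\setminus K$ with boundary value $g_{\cF}(x_E)$, which is exactly your snc-coordinate computation $\frac{\log r}{\log|t|}\log|s_{D^{\mathrm v}}|_*\to\frac{a_F}{b'_F}\log r$ on the smooth locus; and the paper obtains boundedness of the integrand near the crossings from continuity of $\Phi_{\cF}$ on the compact hybrid space (Favre's Theorem 2.10), which you replace by the explicit convexity estimate $b_1\log|z_1|+b_2\log|z_2|=\log|t|+O(1)$ trapping the value between the slopes $\frac{a_i}{b_i}\log r$ — this is in fact the mechanism behind the continuity of $\Phi_{\cF}$, so nothing is circular. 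You also carry out bookkeeping the paper elides: the paper simply passes to a model dominating $\cX$ and the resolution $\cX'$ and "assumes $\cX=\cX'$", implicitly using birational invariance of $\MA_{\NA}$, whereas you prove it via the projection formula (contracted components contribute zero, strict transforms match). Two small remarks. First, your horizontal part is vacuous: a regular admissible datum requires $\fA$ to be a \emph{vertical} fractional ideal sheaf, so after refining the resolution $D=D^{\mathrm v}$ and your uniform-integrability claim for $\log|s_{D^{\mathrm h}}|_*$ — the one genuinely unproven assertion in your write-up — is never needed. Second, your limit carries the multiplicities $b'_F$, i.e.\ you obtain $\sum_E g_{\cF}(x_E)\,b_E\,(\cL_1|_E\cdots\cL_n|_E)$; this is the standard normalization of $\MA_{\NA}$ and is consistent with the paper's own identification $\Theta_E=\int_E\MA_0(h_1,\ldots,h_n)$ (since $[X_0]=\sum_E b_E[E]$), so the absence of $b_E$ in the paper's displayed definition of $\MA_{\NA}$ and in the statement's right-hand side is a normalization glitch of the paper rather than an error on your side. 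Your crossing chart is written with two coordinates while the proposition concerns $n$-dimensional fibres, but the simplex argument extends verbatim to deeper strata.
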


Denote
\begin{align*}
    \Theta_t&:=\MA_t(h_1,\ldots,h_n) \text{ for }t\in\bar{\bD}_r, \\
    \Xi&:=\MA_{\NA}(g_1,\ldots,g_n).
\end{align*}
We also use the notation
\begin{align*}
    \Theta_E:=c_1(\cL_1|_E)\wedge\cdots\wedge c_1(\cL_n|_E),
\end{align*}
so that
\begin{align*}
    \Xi&=\sum_{E\subset \cX_0}\Theta_E \delta_{x_E}, \text{ and}\\
    \Theta_E&=\int_E \MA_0(h_1,\ldots,h_n).
\end{align*}

\begin{proof}[Proof of Proposition \ref{prop: hybridK3-weakconvergence-modelmetrics-modelfunctions}]
We follow the argument in \cite{Favre:2020rz}.

Let $\cX'$ be an snc model of $X$ attached to the model $\cF$. Taking a model dominating $\cX$ and $\cX'$ if necessary, we may assume that $\cX=\cX'$. Take a compact neighborhood $K$ of the singular locus of the special fiber $\cX_0$. 
We use the following fact shown by Favre.
\begin{lmm}[\cite{Favre:2020rz}, Lemma 3.6]\label{lmm: hybridK3-weakconvergence-modelmetrics-modelfunctions-continuousextension}
    Let $\cF$ be any regular admissible datum and $K$ be a compact set taken above, and set $\varphi:=\Phi_{\cF}\circ\psi=(\log r\cdot \phi_{\cF}) / \log|t|$. The function $\varphi:X\to\bR$ extends continuously to $\cX\setminus K$.
    Moreover, For any irreducible component $E$ of $\cX_0$, $\varphi(x)=g_{\cF}(x_E)$ for any $x\in E\setminus K$, where $x_E$ is a divisorial point associated to $E$ in $X^{\an}_{\bC((t))}$. 
\end{lmm}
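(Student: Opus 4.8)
The plan is to prove the lemma by a purely local computation on $\cX$ near a smooth point of the special fiber, exploiting that the prefactor $\log r/\log|t|$ tends to $0$ and therefore annihilates every bounded contribution. Since $\cX=\cX'$ is a log resolution of $\fA$ (after passing to a common dominating model), the fractional ideal $\fA$ is locally principal on $\cX$; this is the structural fact that makes the argument go through. Fix an irreducible component $E$ of $\cX_0$ and a point $x_0\in E\setminus K$. Because $K$ contains a neighborhood of the singular locus of $\cX_0$, the point $x_0$ is a smooth point of the reduced special fiber lying on $E$ alone, so no other component passes through it.

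First I would choose local coordinates $(z,w_1,\dots,w_n)$ on a small polydisc $V\ni x_0$ in $\cX$ with $E\cap V=\{z=0\}$. Since $\pi_{\cX}$ vanishes to order $b_E=\ord_E(t)$ along $E$ and $\cX_0\cap V=E\cap V$, I can write $\pi_{\cX}=u\,z^{b_E}$ on $V$ with $u$ a nonvanishing holomorphic unit, whence $\log|t|=b_E\log|z|+O(1)$. Next I trivialize $\cL^d$ over $V$ by a holomorphic frame $e$ and write $\tau_i=f_i\,e$; the Fubini--Study norm of the frame, $\log|e|_*$, is smooth and bounded on $V$, hence $O(1)$. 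Local principality of $\fA$ means $(f_0,\dots,f_l)=(z^{c_E})$ up to a unit, where $c_E=\ord_E(\fA)$, so the generators have no common zero after dividing by $z^{c_E}$ and $\log\max_i|f_i|=c_E\log|z|+O(1)$. Combining these,
\[
\varphi_{\cF}=\log\max_i|\tau_i|_*=c_E\log|z|+O(1)=\frac{c_E}{b_E}\log|t|+O(1)\qquad\text{on }V\cap\bar{X}_r.
\]

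With this expansion the conclusion is immediate: on $V\cap\bar{X}_r$,
\[
\varphi=\frac{\log r}{\log|t|}\,\varphi_{\cF}=\frac{c_E}{b_E}\log r+\frac{\log r}{\log|t|}\,O(1),
\]
and as $x\to E$ we have $\log|t|\to-\infty$, so the error term tends to $0$ uniformly on $V$ (the $O(1)$ is uniformly bounded on the compact $\overline{V}$). Thus $\varphi$ extends continuously across $E\cap V$ with the constant boundary value $\tfrac{c_E}{b_E}\log r$, which is locally constant along $E\setminus K$. Away from $\cX_0$ the function $\varphi$ is already continuous (a ratio of continuous functions with $\log|t|\neq0$ for $|t|\le r<1$), so patching these local extensions yields a continuous function on $\cX\setminus K$. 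It remains to identify the boundary value with $g_{\cF}(x_E)$: by the definition of the divisorial point, $\log|f(x_E)|=\tfrac{\ord_E(f)}{b_E}\log r$, hence $g_{\cF}(x_E)=\log|\fA|(x_E)=\tfrac{\ord_E(\fA)}{b_E}\log r=\tfrac{c_E}{b_E}\log r$, matching the computed limit.

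The steps requiring genuine care are three. The local principality of $\fA$, which underlies $\log\max_i|f_i|=c_E\log|z|+O(1)$, relies essentially on the log-resolution hypothesis together with the reduction to a dominating model; without it the generators need not cut out a single divisor and the expansion fails. The removal of $K$ is likewise essential rather than cosmetic: at a crossing point $E\cap E'$ one has $\pi_{\cX}=u\,z_1^{b_E}z_2^{b_{E'}}$, and the analogue of the ratio above would depend on the direction of approach, so $\varphi$ genuinely fails to extend continuously there. Finally, one must keep the convergence uniform in the transverse $w$-directions---not merely along a single fiber---to obtain continuity of the extension; this is exactly what the uniform boundedness of the $O(1)$ terms on $\overline{V}$ together with $\log|t|\to-\infty$ provide, and it also absorbs without change the possibly negative value of $c_E$ arising from the fractional (meromorphic) nature of $\fA$.
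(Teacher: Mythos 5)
Your proof is correct and follows essentially the same route as the paper's source for this statement (the paper only cites \cite{Favre:2020rz}, Lemma 3.6, whose proof is exactly this local computation: coordinates at a smooth point of $\cX_0$ on a single component $E$, the expansions $\log|t|=b_E\log|z|+O(1)$ and $\varphi_{\cF}=\ord_E(\fA)\log|z|+O(1)$ via local principality of $\fA$ on the log resolution, and uniformity of the $O(1)$ terms). One small remark: your identification $g_{\cF}(x_E)=\tfrac{\ord_E(\fA)}{b_E}\log r$ uses the standard convention $\log|\fA|(x)=\sup_j\log|g_j(x)|-\log|f(x)|$, which is the correct one (and the one forced by your archimedean limit, since $\log r<0$ turns $\sup_j|g_j(x_E)|$ into $\min_j\ord_E(g_j)=\ord_E(f\cdot\fA)$); the ``$\inf$'' appearing in the paper's definition of $\log|\fA|$ is a typo.
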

Set
\begin{align*}
    \Delta_t:=\left| \int_{X_t}\Phi_{\cF}d\psi_{*,t}\Theta_t-\sum_{E\subset\cX_0}g_{\cF}(x_E)\Theta_E\right|
\end{align*}
and it is enough to show that $\limsup_{t\to0}\Delta_t=0$. 
\begin{align*}
    \Delta_t &= \left| \int_{X_t}\Phi_{\cF}d\psi_{*,t}\Theta_t-\sum_{E\subset\cX_0}g_{\cF}(x_E)\Theta_E\right| \\
    &= \left| \int_X \Phi_{\cF}\circ\psi d\Theta_t-\sum_{E}g_{\cF}(x_E)\Theta_E \right| \\
    &=\left| \int_{X_t} \varphi d\Theta_t-\sum_{E}g_{\cF}(x_E)\Theta_E \right| \\
    &\leq \left(\left| \int_K \varphi d\Theta_t \right| + \left| \int_{X_t\setminus K} \varphi d\Theta_t - \sum_E g_{\cF}(x_E)\Theta_E \right| \right). \\
\end{align*}
We estimate the second term of the last line.
Take any $\epsilon>0$. Note that the map
\begin{align*}
    t\mapsto \MA_t(h_1,\ldots,h_n)
\end{align*}
is continuous with respect to weak topology. Hence, together with Lemma \ref{lmm: hybridK3-weakconvergence-modelmetrics-modelfunctions-continuousextension}, we see that there exists a small positive number $r_0>0$ such that, for any $t\in\bar{\bD}_{r_0}$,
\begin{align*}
    \left| \int_{X_t\setminus K} \varphi d\Theta_t - \sum_E g_{\cF}(x_E)\Theta_E \right|&<\epsilon+\sum_{E\subset\cX_0}g(x_E)\int_{E\cap K}\MA_0(h_1,\ldots,h_n) \\
\end{align*}

Hence it is enough to show that for any $\epsilon>0$, there exists a small compact neighborhood $K$ and a real number $r_1>0$ such that for any $t\in\bar{\bD}_{r_1}$,
\begin{align*}
    \max\left\{\left| \int_K \varphi d\Theta_t \right|, \left|\int_{E\cap K}\MA_0(h_1,\ldots,h_n)\right|\right\}<\epsilon.
\end{align*}

Note that $\Phi_{\cF}$ is continuous on the hybrid space $X^{\hyb}$. The topological closure $\overline{\psi(K\setminus\{t=0\})}$ in $X^{\hyb}$ is a compact set and hence $\Phi_{\cF}$ is bounded, from which we can deduce that $\Phi_{\cF}\circ\psi$ is bounded in $K$. Hence, it is enough to show that 
\begin{align*}
    \left| \int_K \varphi d\Theta_t \right|<\epsilon.
\end{align*}
Since $E\cap Z$ is a proper closed subvariety for any irreducible component $E$ of $\cX_0$, we have
\begin{align*}
    \int_{E\cap Z}\MA_0(h_1,\ldots,h_n)|_E=0,
\end{align*}
which implies
\begin{align*}
    \MA_0(h_1,\ldots,h_n)(Z)=0.
\end{align*}
By continuity, we have
\begin{align*}
    \MA_t(h_1,\ldots,h_n)(K)< \epsilon
\end{align*}
for a sufficiently small compact neighborhood $K$ of $Z$ and $t$, i.e.,
\begin{align*}
    \left| \int_K \varphi d\Theta_t \right|<\epsilon.
\end{align*}
Now, for such $r_1$ and $K$, set $r_2:=\min(r_0,r_1)$. For any $t\in^bar{\bD}_{r_2}$,
\begin{align*}
    \Delta_{t}&\leq \left(\left| \int_K \varphi d\Theta_t \right| + \left| \int_{X_t\setminus K} \varphi d\Theta_t - \sum_E g_{\cF}(x_E)\Theta_E \right| \right) \\
    &\leq \epsilon + \epsilon + \sum_{E\subset{\cX_0}}g(x_E)\epsilon \\
    &=\left(2+\sum_{E\subset \cX_0}g(x_E)\right)\epsilon.
\end{align*}
Since $2+\sum g(x_E)$ is independent of $t$ and $K$ and $\epsilon$ can be taken arbitrarily small, we complete the proof.
\end{proof}

\section{hybrid dynamics of K3 surfaces}\label{section: hybridK3 - main theorem}


\subsection{Construction of $\eta_t$} \label{subsec: main theorem - construction of the inv measures and main theorem}
We follow the notation from Section \ref{subsection: background - hybrid K3 dynamics}. Before stating our result, we prepare setups. As we saw in Section \ref{subsection: background - hybrid K3 dynamics}, the invariant measures attached to hyperbolic automorphisms of K3 surfaces are constructed by Cantat \cite{Cantat:2001} in the complex case and Filip \cite{Filip:2019ij} in the non-archimedean case in slightly different ways. We re-construct them following Filip's construction in both cases to argue them over the hybrid setting.

As in Section \ref{subsection: background - hybrid K3 dynamics}, let $X$ be an analytic family of projective K3 surfaces and $f: X\to X$ be that of hyperbolic automorphisms. Let $L_1,\ldots, L_k$ be ample line bundles over $X$ such that $L_{1,K}.\ldots,L_{k,K}$ form a basis of $\Pic(X_K)$.

Take an snc model $\cX$ of $X$ and $\cL_1,\ldots,\cL_k$ be semiample $\bQ$-line bundles such that $\cL_i|_X=L$. For each $n\in\bZ$, we take an snc model $\cX^{(n)}$ and a morphism $f'^{(n)}:\cX^{(n)}\to\cX$ such that $f'^{(n)}|_X=f^n$.

We fix a very ample line bundle $\cL$ of $\cX$. For any $n\in\bZ$, We note that, as $L_1,\ldots,L_k$ generate the Picard group on $X_K$, there exist natural numbers $n_1,\ldots,n_k$ and a vertical Cartier divisor $D$ such that
\begin{align*}
    \cL\simeq \bigotimes \cL_i^{\otimes n_i}\otimes \cO_{\cX}(D).
\end{align*}
For the later argument, we set $\cL^{(n)}$ to be the pull-back $(f'^{(n)})^*\cL$.

We observe that since $f$ is automorphism, there exists an matrix $A=(a_{ij})\in\GL(n,\bZ)$ such that
\begin{align*}
    f^*L_i\simeq \bigotimes_j L_j^{\otimes a_{ij}} \text{ for }i=1,\ldots,k,
\end{align*}
whence
\begin{align}\label{eq: hybridK3-weakconvergence-equation-of-linebundles-under-the-action-of-f}
    f^*L_i^{\otimes m}\simeq \bigotimes L_j^{\otimes ma_{ij}} \text{ for }i=1,\ldots,k.
\end{align}
For the later argument, set $(a_{ij}^{(n)}):=A^n$ for any $n\in\bZ$.

Define a $\bG_m^k$-torsor $E$ over X as
\begin{align*}
    E:=\bigoplus_i (L_i^{\otimes m})^*,
\end{align*}
where $L^*$ for any line bundle $L$ is a $\bG_m$-torsor given by removing the zero sections from $L$. By the relation (\ref{eq: hybridK3-weakconvergence-equation-of-linebundles-under-the-action-of-f}), we have an equivalence of torsors $E\simeq f^*E$. By combining the natural morphism $f^*E\to E$, we have a morphism $F: E\to E$ that is compatible with $f: X\to X$. We note that we can write the map $F$ in a local chart using $A$. That is, for $z\in X_t$, take open neighborhoods $z\in U$ and $f(z)\in V$ of $X_t$ such that $f(V)\subset U$ and they trivialize $E$. Then, for any $(z,(s_i))\in U\times (\bC^*)^k$, $F(z,(s_i))=(f(z),(\prod_js_j^{a_ij})$. For the later argument, set
\begin{align*}
    A^n=(a_{ij}^{(n)})_{i,j}
\end{align*}
so that
\begin{align*}
    (f^n)^*L_i\simeq \bigotimes_j L_j^{\otimes a_{ij}^{(n)}}.
\end{align*} 

Since $L=\sum_in_iL_i$, we have a natural homomorphism $\varphi_0:E\to (L^{\otimes m})^*$. In a local chart, $\varphi_0$ is described as
\begin{align*}
    \varphi_0(z,(s_i)_i)=\left(z, \prod_{i=1}^k s_i^{n_i}\right).
\end{align*}
In the same way, for any $n\in \bZ$ there is a natural homomorphism $\varphi_n:E\to (L_n^{\otimes m})^*$, so that $f^n\circ \varphi_n=\varphi_0\circ F^n$. 

First, we represent $\eta_t$ for $t\in \bD^*$ as a weak limit of the mixed Monge-Amp{\`e}re measures $\{\eta_{n,t}\}$. Define a sequence of functions $\{G_n:E\to\bR\}_{n=-\infty}^{\infty}$ by
\begin{align*}
    G_n:=\frac{1}{m\cdot \lambda^n}\log\left( (F^n)^*\varphi^*_0h^m\right).
\end{align*}
For each $n=1,2,3,\ldots$, we attach a family of measure $\{\eta_{n,t}\}_{t\in \bD^*}$ as follows: for any open subset $U\subset X_t$ and non-vanishing section $s:U\to E$, define $\eta_{n,t}|_U:=dd^c (G_n\circ s)\wedge dd^c (G_{-n}\circ s)$. 

\begin{lmm}
    The measure $\eta_{n,t}$ is well-defined.
\end{lmm}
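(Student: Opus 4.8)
The plan is to show that the local recipe $dd^c(G_n\circ s)\wedge dd^c(G_{-n}\circ s)$ is independent of the chosen non-vanishing section $s\colon U\to E$, so that the local pieces glue to a single measure on $X_t$. Since $E$ is a holomorphic $\bG_m^k$-torsor, over a trivializing open set $U$ we may write $E|_U\simeq U\times(\bC^*)^k$ and a section as $s=(s_1,\dots,s_k)$ with each $s_i\in\cO^*(U)$; any other section has the form $s'=(g_1s_1,\dots,g_ks_k)$ with $g_i\in\cO^*(U)$. Because $h$ is a \emph{smooth} metric and $\varphi_0,F^n$ are holomorphic, $G_n\circ s$ is a smooth real-valued function, hence $dd^c(G_n\circ s)$ is a smooth $(1,1)$-form and the wedge product is defined pointwise (no positivity is needed for well-definedness, only for the later identification with $\eta_t$). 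So the only issue is invariance under the change $s\mapsto s'$.

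The key step is the explicit transformation rule. Using the local descriptions $\varphi_0(z,(s_i))=(z,\prod_i s_i^{n_i})$ and $F^n(z,(s_i))=(f^n(z),(\prod_j s_j^{a^{(n)}_{ij}})_i)$, and writing the smooth metric in a local frame as $\log h^m(z,w)=\log|w|^2+\rho_m(z)$ with $\rho_m$ smooth, I compute
\begin{align*}
    \log\!\left((F^n)^*\varphi_0^*h^m\right)(z,(s_i))
    =2\sum_j c^{(n)}_j\log|s_j|+\rho_m(f^n(z)),
    \qquad c^{(n)}_j:=\sum_i n_i a^{(n)}_{ij}.
\end{align*}
Substituting $s'=(g_is_i)$ and subtracting gives
\begin{align*}
    G_n\circ s'-G_n\circ s=\frac{2}{m\lambda^n}\sum_j c^{(n)}_j\log|g_j|.
\end{align*}
Each $g_j$ is a nowhere-vanishing holomorphic function, so $\log|g_j|$ is pluriharmonic and $dd^c\log|g_j|=0$; the difference above therefore lies in the kernel of $dd^c$. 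Hence $dd^c(G_n\circ s')=dd^c(G_n\circ s)$, and likewise for $G_{-n}$, so the wedge product is unchanged.

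Finally I would note that this same computation handles the gluing: on an overlap $U\cap U'$ the two chosen sections differ by a torsor transition function valued in $\cO^*$, so the two locally defined currents $dd^c(G_{\pm n}\circ s)$ agree there, and consequently the local wedge products agree on overlaps and patch to a globally defined $\eta_{n,t}$ on $X_t$. The only delicate point is the bookkeeping in the local chart, namely tracking how the composite $\varphi_0\circ F^n$ interacts with the $\bG_m^k$-action through the matrix $A^n$ and the weights $n_i$; once the coefficients $c^{(n)}_j$ are identified, the pluriharmonicity of $\log|g_j|$ makes the argument immediate. I expect no real obstacle beyond this computation, since the mechanism is exactly that a metric is intrinsic while its local weight shifts by a pluriharmonic term under a change of frame.
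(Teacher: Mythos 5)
Your proof is correct and takes essentially the same route as the paper's: both compute how $G_{\pm n}\circ s$ transforms under a change of trivializing section through the exponents $c_j^{(n)}=\sum_i n_i a_{ij}^{(n)}$, and then kill the discrepancy using that $\log|g_j|$ is pluriharmonic, so $dd^c(G_{\pm n}\circ s)$ is section-independent and the local wedge products glue. If anything your index bookkeeping for $\varphi_0\circ F^n$ is slightly more careful than the paper's, but the mechanism is identical.
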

\begin{proof}
    It is enough to show that $dd^c (G_n\circ s)\wedge dd^c (G_{-n}\circ s)$ is independent of the choice of $s$. Let $s'$ be another section on $U'$ such that $U\cap U'\neq\empty$ and $s'=us$ on $U\cap U'$ for some $u:U\cap U'\to (\bC^*)^n$. If we write $s(z)=(z, (s_i(z))_i)$ and $u(z)=(u_i(z))$ in a local chart, 
    \begin{align*}
        G_n\circ s'(z)&=\frac{1}{m\cdot \lambda^n}\log\left( (F^n)^*\varphi^*_0h^m(s'(z))\right) \\
        &=\frac{1}{m\cdot \lambda^n}\log\left( (F^n)^*\varphi^*_0h^m(us(z))\right) \\
        &=\frac{1}{m\cdot \lambda^n}\left(\log\left|\prod_i u_i(z)^{\sum_j a_{ij}^{(n)}n_j} \right|+\log\left( (F^n)^*\varphi^*_0h^m(s(z))\right)\right) \\
        &=\frac{1}{m\cdot \lambda^n}\left(\sum_j a_{ij}^{(n)}n_j \log|u_i(z)|+G_n\circ s(z)\right).
    \end{align*}
    Since $\log|u_i|$ is harmonic for each $i$, the first and last lines must coincide after taking $dd^c$.
\end{proof}
We set $h_n':=(f'^{(n)})^*h'$ and $h_n:=(f^n)^*h$. $h_n'$ is a smooth metric on $\cL^{(n)}$ and $h_n$ is on $(f^n)^*L$. Since $f'^{(n)}$ is extension of $f^n$ to some snc model $\cX^{(n)}$ of $X$, $\cL^{(n)}|_X=(f^n)^*L$ and $h_n'|_{(f^n)^*L}=h_n$. We observe that $\eta_{n,t}$ is given from these metrics, too:
\begin{prop}
    For any $n=1,2,3,\ldots$ and $t\in\bD^*$,
    \begin{align*}
        \eta_{n,t}=\frac{1}{m^2}\MA_t(h_n^m,h_{-n}^m).
    \end{align*}
\end{prop}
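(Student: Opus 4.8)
The plan is to reduce the identity to a local computation on a single fiber $X_t$ and to recognize the local potentials $G_n\circ s$ and $G_{-n}\circ s$ as (rescaled) curvature potentials of the model metrics $h_n^m=((f^n)^*h)^m$ and $h_{-n}^m$. Fix $t\in\bD^*$ and a non-vanishing holomorphic section $s:U\to E$ over an open $U\subset X_t$. By the preceding lemma the local expression $dd^c(G_n\circ s)\wedge dd^c(G_{-n}\circ s)$ defining $\eta_{n,t}$ is independent of $s$, so it suffices to verify the equality on such a chart.

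First I would rewrite $G_n\circ s$. By definition $G_n=\frac{1}{m\lambda^n}\log\big((F^n)^*\varphi_0^*h^m\big)$, so $G_n\circ s(z)=\frac{1}{m\lambda^n}\log\big(h^m(\varphi_0(F^n(s(z))))\big)$. Using the intertwining relation $\varphi_0\circ F^n=f^n\circ\varphi_n$ this becomes $\frac{1}{m\lambda^n}\log\big(h^m(f^n(\varphi_n(s(z))))\big)$. Since $h_n=(f^n)^*h$ is the pullback metric, the natural bundle map $f^n:L_n^{\otimes m}\to L^{\otimes m}$ (with $L_n=(f^n)^*L$) is a fiberwise isometry, so $h^m(f^n(\cdot))=h_n^m(\cdot)$ and hence
\[ G_n\circ s=\frac{1}{m\lambda^n}\log\big(h_n^m(\varphi_n\circ s)\big),\qquad G_{-n}\circ s=\frac{\lambda^n}{m}\log\big(h_{-n}^m(\varphi_{-n}\circ s)\big). \]
The composite $\sigma_n:=\varphi_n\circ s:U\to(L_n^{\otimes m})^*$ is a local non-vanishing holomorphic section of $L_n^{\otimes m}$.

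Next, because $\sigma_n$ is non-vanishing and holomorphic, $\log\big(h_n^m(\sigma_n)\big)$ is a local potential for the curvature of $h_n^m$ in the paper's convention; that is, $dd^c\log\big(h_n^m(\sigma_n)\big)=dd^c\log h_n^m$ on $X_t$ (this is the content of the well-definedness computation, where the ambiguity $\frac1{m\lambda^n}\sum_j a_{ij}^{(n)}n_j\log|u_i|$ is pluriharmonic and annihilated by $dd^c$). Therefore, as currents on $X_t$,
\[ dd^c(G_n\circ s)=\frac{1}{m\lambda^n}\,dd^c\log h_n^m,\qquad dd^c(G_{-n}\circ s)=\frac{\lambda^n}{m}\,dd^c\log h_{-n}^m. \]
Taking the wedge product, the constants $\lambda^{\pm n}$ cancel and the two factors of $\frac1m$ combine, giving
\[ \eta_{n,t}=dd^c(G_n\circ s)\wedge dd^c(G_{-n}\circ s)=\frac{1}{m^2}\,dd^c\log h_n^m\wedge dd^c\log h_{-n}^m. \]
Since for $t\in\bD^*$ the current $[X_t]$ is integration over the fiber, wedging the $X_t$-restricted forms agrees with $\MA_t(h_n^m,h_{-n}^m)=dd^c\log h_n^m\wedge dd^c\log h_{-n}^m\wedge[X_t]$, which yields $\eta_{n,t}=\frac{1}{m^2}\MA_t(h_n^m,h_{-n}^m)$.

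The step I expect to be the main obstacle is the identification in the second paragraph: correctly tracking that, under the torsor maps and the intertwining $\varphi_0\circ F^n=f^n\circ\varphi_n$, the pullback metric $(f^n)^*h$ realizes $f^n$ as a fiberwise isometry, so that the function $(F^n)^*\varphi_0^*h^m$ evaluated along $s$ equals the $h_n^m$-norm of the genuine holomorphic section $\varphi_n\circ s$, together with checking that $\varphi_n\circ s$ is indeed non-vanishing and holomorphic so that the curvature identity applies. Once this is in place, the cancellation of $\lambda^{\pm n}$ and the bookkeeping of the $\frac1m$ normalizations producing the overall factor $\frac{1}{m^2}$ are routine.
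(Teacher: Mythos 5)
Your argument is correct and follows essentially the same route as the paper: reduce to a local chart via a non-vanishing section $s$ of $E$, use the intertwining relation $\varphi_0\circ F^n=f^n\circ\varphi_n$ together with the pullback-metric identity $h^m(f^n(\cdot))=h_n^m(\cdot)$ to identify $G_{\pm n}\circ s$ with $\frac{1}{m\lambda^{\pm n}}\log\bigl(h_{\pm n}^m(\varphi_{\pm n}\circ s)\bigr)$, and then wedge so that the factors $\lambda^{\pm n}$ cancel and the normalization $\frac{1}{m^2}$ remains. Your version is in fact slightly more careful than the paper's in spelling out that $\varphi_n\circ s$ is a non-vanishing holomorphic local section (so its $\log$-norm is a genuine curvature potential) and in consistently tracking the exponent $m$, which the paper's final display drops typographically.
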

\begin{proof}
    Take any open set of $U\subset X_t$ and a section $s: U\to E$. We note that $\varphi_n(s):U\to L_n$ is a non-vanishing section of $L_n$. By the definition of the mixed Monge-Amp{\`e}re operator and $\eta_{n,t}$, it is enough to compere $dd^c (G_n\circ s)\wedge dd^c (G_{-n}\circ s)$ and $dd^c \log|h_n(\varphi_n(s))|\wedge dd^c \log|h_{-n}(\varphi_{L_{-n}}(s))|$. Since $f^n\circ \varphi_n=\varphi_0\circ F^n$,
    \begin{align*}
        G_n(s)&=\frac{1}{m\lambda^n}\log|h^m(\varphi_0(F^n(s)))| \\
        &=\frac{1}{m\lambda^n}\log|h^m(f^n(\varphi_n(s)))| \\
        &=\frac{1}{m\lambda^n}\log|h^m_n(\varphi_n(s)))|.
    \end{align*}
    Therefore, for $n=1,2,3,\ldots$,
    \begin{align*}
        dd^c (G_n\circ s)\wedge dd^c (G_{-n}\circ s)&=dd^c \frac{1}{m\lambda^n}\log|h^m_n(\varphi_n(s)))|\wedge dd^c \frac{1}{m\lambda^{-n}}\log|h^m_{-n}(\varphi_{-n}(s)))| \\
        &=\frac{1}{m^2}dd^c \log|h_n(\varphi_n(s))|\wedge dd^c \log|h_{-n}(\varphi_{-n}(s))|,
    \end{align*}
    which shows the claim.
\end{proof}

In Section \ref{subsec: hybridK3-weakconvergence-invariantmeasure-uniformestimation}, we will show that there exists continuous functions $G_{\pm\infty}:E\to\bR$ such that
\begin{align*}
    \lim_{n\to\infty} G_n&=G_{\infty}, \\
    \lim_{n\to-\infty} G_n&=G_{-\infty},
\end{align*}
where the limit is uniform on each $X_t$. We remark that this limit has uniformity in $t$, which is essential in our proof of the main theorem. So far, we admit this convergence and define $\eta_t$ as the weak limit of $\{\eta_{n,t}\}$ for any fixed $t\in\bD^*$. We note that $\eta_t$ is locally written as $dd^c (G_{\infty}\circ s)\wedge dd^c (G_{-\infty}\circ s)$ for a local section $s$. By definition of $G_{\pm\infty}$,
\begin{align*}
    G_{\pm\infty}\circ F=\lambda^{\pm 1}G_{\pm\infty},
\end{align*}
so that $\eta_t$ is invariant under the action of $f^*$. We note that by the uniqueness in Proposition \ref{proposition: hybridK3 - canocurrents for cpx K3 surfaces}, this $\eta_t$ coincides with the one constructed by Cantat.

We observe that the total mass of $\eta_t$ is independent of $t$. Recall that $\eta_{n,t}$ is written as the mixed Monge-Amp{\`e}re mesure, whose total mass is calculated by the intersection number. Since $L^{(n)}$ is written by a linear sum of $L_1,\ldots,L_k$, it is determined from the intersection number $(L_1\cdots L_n)$, which is independent of $t$. The total mass of their limit is, hence, also independent of $t$. We set $M:=\eta_t(X_t)$ to be the total mass.

Next, we discuss the non-archimedean version of the above argument. Set $g$ to be the model metric on $L_{\bC((t))}^{\an}$ attached to $\cL$ and $g_n$ to be that on $(f_{\bC((t))}^{\an,n})^*L_{\bC((t))}^{\an}$ attached to $\cL^{(n)}$ for any $n\in\bZ$. By definition, $g_n=(f_{\bC((t))}^{\an, n})^*g$. We note that all the objects and morphisms appearing above, i.e., the variety $X$, the line bundles $L$ and $L_i$, the $\bG_m^k$-torsor $E$, and the morphisms $f$, $F$, and $\phi_n$, have the non-archimedean analytification. 

We define the same function as $G_{n}$ and $G_{\pm\infty}$ in the non-archimedean setting; set 
\begin{align*}
    G_n^{\NA}=\frac{1}{m\cdot\lambda^n}\log \left((F_{\bC((t))}^{\an})^* (\varphi_{L,\bC((t))}^{\an})^*g^m\right).    
\end{align*}

As in the complex case, we can define $\eta_n^{\NA}$, which coincides with $\MA_{\NA}(g_n^m,g_{-n}^m)$. For details, see Section 5.6 of \cite{Filip:2019ij}.

We note that, since $g_n^m$ is a model metric associated to $(\cL_n^m,L_n^m)$ and $h_n^m$ is a smooth metric on $L_n^m$, Proposition \ref{prop: hybridK3-weakconvergence-modelmetrics} implies the following:

\begin{prop}\label{prop: construction of measures - weak conv of nth measures}
    For each $n=1,2,3,\ldots$, the following weak convergence of measures hold:
    \begin{align*}
        \lim_{t\to 0}\psi_{t,*}\eta_{n,t}=\psi_{0,*}\eta_n^{\NA}.
    \end{align*}
\end{prop}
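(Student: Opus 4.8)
The plan is to deduce this directly from Proposition \ref{prop: hybridK3-weakconvergence-modelmetrics}, which is exactly a weak-convergence statement for pushforwards of mixed Monge-Amp\`ere measures of model data on an $n$-dimensional family. Here the dimension is $n=2$ (K3 surfaces), so I would apply it to the pair of line bundles $L_n^m=(f^n)^*L^m$ and $L_{-n}^m=(f^{-n})^*L^m$, equipped with the smooth metrics $h_n^m, h_{-n}^m$ on the archimedean side and the model metrics $g_n^m=(f_{\bC((t))}^{\an,n})^*g^m$ and $g_{-n}^m$ on the non-archimedean side. Since $\eta_{n,t}=\frac{1}{m^2}\MA_t(h_n^m,h_{-n}^m)$ by the preceding Proposition, and $\eta_n^{\NA}$ is its non-archimedean counterpart built from $\MA_{\NA}(g_n^m,g_{-n}^m)$ with the same normalization, and both $\psi_{t,*}$ and scaling by a constant commute with weak limits, the claim follows as soon as the hypotheses of Proposition \ref{prop: hybridK3-weakconvergence-modelmetrics} are verified for this data.

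First I would check ampleness: $L^m$ is ample because $L=\sum_i n_i L_i$ is a positive combination of the ample bundles $L_i$, and pulling back an ample bundle along the automorphism $f^{\pm n}$ preserves ampleness, so $L_{\pm n}^m$ are ample. On the model side, $\cL^{(n)}=(f'^{(n)})^*\cL$ is the pullback of the very ample (hence semiample) bundle $\cL$ under the morphism $f'^{(n)}:\cX^{(n)}\to\cX$, so it is a semiample model; likewise $\cL^{(-n)}$. By construction $h_{\pm n}^m$ are smooth metrics on $L_{\pm n}^m$ and $g_{\pm n}^m$ are the model metrics these semiample models determine, so all the structural hypotheses of Proposition \ref{prop: hybridK3-weakconvergence-modelmetrics} are in place.

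The one genuinely delicate point is that $\cL^{(n)}$ and $\cL^{(-n)}$ a priori live on the \emph{different} snc models $\cX^{(n)}$ and $\cX^{(-n)}$, whereas Proposition \ref{prop: hybridK3-weakconvergence-modelmetrics} requires the two semiample models to sit over a single snc model. I would resolve this by fixing an snc model $\cX''$ dominating both $\cX^{(n)}$ and $\cX^{(-n)}$ and replacing the bundles by their pullbacks to $\cX''$. This is harmless: the archimedean measure $\MA_t(h_n^m,h_{-n}^m)$ depends only on the metrics on $L_{\pm n}^m$ over $X_t$, hence is insensitive to the chosen model, and the non-archimedean model metrics $g_{\pm n}^m$ are unchanged by refining the model, so the atomic formula for $\MA_{\NA}(g_n^m,g_{-n}^m)$ computed on $\cX''$ agrees with the original one via the projection-formula compatibility of intersection numbers under a birational morphism of models (the divisorial points $x_E$ being intrinsic to the valuations). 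With all data placed on the common model $\cX''$, Proposition \ref{prop: hybridK3-weakconvergence-modelmetrics} applies verbatim and gives $\lim_{t\to0}\psi_{t,*}\MA_t(h_n^m,h_{-n}^m)=\psi_{0,*}\MA_{\NA}(g_n^m,g_{-n}^m)$; dividing by $m^2$ yields the stated convergence. I expect this bookkeeping across the two different models, together with checking that both Monge-Amp\`ere measures are invariant under refinement, to be the main obstacle, though a fairly routine one.
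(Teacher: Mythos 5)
Your proof is correct and takes essentially the same route as the paper, which states this proposition as an immediate consequence of Proposition \ref{prop: hybridK3-weakconvergence-modelmetrics} applied to the smooth metrics $h_{\pm n}^m$ and the model metrics $g_{\pm n}^m$, using $\eta_{n,t}=\frac{1}{m^2}\MA_t(h_n^m,h_{-n}^m)$ and its non-archimedean counterpart. The passage to a common snc model dominating $\cX^{(n)}$ and $\cX^{(-n)}$, which you verify carefully, is left implicit in the paper but is exactly the standard justification needed.
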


Especially, the total mass of $\eta^{\NA}_n$ coincides with that of $\eta_{n,t}$, which is independnt of $t$.

Filip \cite{Filip:2019ij} showed that the sequence of functions $G_{n}^{\NA}$ uniformly converges to $G_{\infty}^{\NA}$ as $n\to\infty$ and $G_{-\infty}^{\NA}$ as $n\to-\infty$. Then, Corollaire 5.6.5 of \cite{chambertloir2012formes}, i.e., the non-archimedean version of Proposition \ref{proposition: backgrounds - continuity of MA measures} shows that there exists a measure $\eta_0$ such that $f^*\eta_0=\eta_0$, which is a weak limit of $\{\eta_n^{\NA}\}$. For details, see Corollary 5.6.8 of \cite{Filip:2019ij}.

We note that $\eta_0$ has total mass $M$ since that of $\eta_n^{\NA}$ coincides $\eta_t(X_t)$, whose limit is $M$. In Section \ref{section: hybridK3 - Introduction}, we assume that the measures appearing in Theorem \ref{thm: hybridK3-weakconvergence-invariantmeasure} is a probability measure. Since the total mass of $\eta_t$ and $\eta_0$ are all the same, it is enough to show the following statement:
\begin{thm}\label{thm: hybridK3 - main theorem - main theorem}
    In the above notation, we have a weak convergence
    \begin{align*}
        \lim_{t\to0}\psi_{t,*}\eta_t=\psi_{0,*}\eta_0.
    \end{align*}
\end{thm}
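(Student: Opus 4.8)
The plan is to run an interchange-of-limits (three-$\epsilon$) argument on the doubly-indexed family $\{\psi_{t,*}\eta_{n,t}\}$, exploiting that the two one-parameter limits are already under control: letting $t\to0$ for fixed $n$ is Proposition \ref{prop: construction of measures - weak conv of nth measures}, and letting $n\to\infty$ produces $\eta_t$ (complex side) and $\eta_0$ (non-archimedean side). By Proposition \ref{proposition; hybridK3 - backgrounds - model functions - density} the model functions are dense in $C^0(X^{\hyb})$ for the uniform topology, and since all measures in sight have the same total mass $M$, it suffices to prove $\int_{X^{\hyb}}\Phi\,\psi_{t,*}\eta_t\to\int_{X^{\hyb}}\Phi\,\psi_{0,*}\eta_0$ for every model function $\Phi$.

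Fix such a $\Phi$ and $\epsilon>0$. Inserting the approximants via the triangle inequality gives
\begin{align*}
    \left|\int\Phi\,\psi_{t,*}\eta_t-\int\Phi\,\psi_{0,*}\eta_0\right|
    &\leq\left|\int\Phi\,\psi_{t,*}(\eta_t-\eta_{n,t})\right|
    +\left|\int\Phi\,\psi_{t,*}\eta_{n,t}-\int\Phi\,\psi_{0,*}\eta_n^{\NA}\right|\\
    &\quad+\left|\int\Phi\,\psi_{0,*}(\eta_n^{\NA}-\eta_0)\right|.
\end{align*}
The third term is independent of $t$ and tends to $0$ as $n\to\infty$ by Filip's weak convergence $\eta_n^{\NA}\to\eta_0$, so it is $<\epsilon$ once $n\geq N_1$. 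For the chosen $n$, the second term tends to $0$ as $t\to0$ by Proposition \ref{prop: construction of measures - weak conv of nth measures}. Thus everything reduces to making the first term small \emph{uniformly in $t$}.

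The crux is precisely this uniform-in-$t$ estimate, and here I would use the uniform convergence $G_{\pm n}\to G_{\pm\infty}$ established in Section \ref{subsec: hybridK3-weakconvergence-invariantmeasure-uniformestimation}. Writing the measures locally through a section $s$ as $\eta_{n,t}=dd^c u_n\wedge dd^c v_n$ and $\eta_t=dd^c u_\infty\wedge dd^c v_\infty$, with $u_{(\cdot)}=G_{(\cdot)}\circ s$ and $v_{(\cdot)}=G_{-(\cdot)}\circ s$, one telescopes
\begin{align*}
    \eta_{n,t}-\eta_t=dd^c(u_n-u_\infty)\wedge dd^c v_n+dd^c u_\infty\wedge dd^c(v_n-v_\infty),
\end{align*}
pairs against the pullback $\varphi=\Phi\circ\psi_t$, and integrates by parts on the compact surface $X_t$ to transfer the $dd^c$ off the difference of potentials and onto $\varphi$. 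This bounds the first term by $\bigl(\|u_n-u_\infty\|_\infty+\|v_n-v_\infty\|_\infty\bigr)$ times the total variation of measures such as $dd^c\varphi\wedge dd^c v_n$. The latter masses are cohomological, controlled by the intersection numbers producing the total mass $M$, hence bounded independently of $t$ and $n$; the sup-norms are $<\epsilon$ for $n\geq N_2$ uniformly in $t$ by the cited uniform convergence. Choosing $n\geq\max(N_1,N_2)$ and then $t$ small enough completes the estimate.

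I expect the genuine difficulty to lie in this uniform control as $t\to0$, and two points will need care. First, a model function $\Phi$ pulls back only to a function extending continuously to $\cX\setminus K$ (Lemma \ref{lmm: hybridK3-weakconvergence-modelmetrics-modelfunctions-continuousextension}), so $dd^c\varphi$ need not have mass bounded uniformly up to the singular locus $K$ of $\cX_0$; as in the proof of Proposition \ref{prop: hybridK3-weakconvergence-modelmetrics-modelfunctions} one must isolate a shrinking neighborhood of $K$ and absorb its contribution using the uniform smallness of $\Theta_t(K)$ there, treating the complement $X_t\setminus K$ by the integration-by-parts bound above. Second, turning the uniform convergence of potentials into a uniform-in-$t$ weak convergence of the mixed Monge–Amp\`ere measures is exactly the quantitative refinement of Proposition \ref{proposition: backgrounds - continuity of MA measures} (Chern–Levine–Nirenberg type), and it is here that the uniform mass bound $M$ and the section-independence of the limiting potentials must be combined carefully.
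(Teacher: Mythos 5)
Your overall skeleton coincides with the paper's proof: reduce by density of model functions (Proposition \ref{proposition; hybridK3 - backgrounds - model functions - density}) to Theorem \ref{thm: hybridK3-weakconvergence-invariantmeasure-modelfunctions}, split by the triangle inequality into three terms, handle the middle term by Proposition \ref{prop: construction of measures - weak conv of nth measures}, the third by Filip's convergence $\eta_n^{\NA}\to\eta_0$, and attack the first term by telescoping the mixed Monge--Amp\`ere measures and integrating by parts to trade $dd^c$ between the potential differences and the test function. This is exactly what Section \ref{subsec: hybridK3-weakconvergence-invariantmeasure-proof} does.

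However, there is a genuine gap at the crux, and it is precisely the delicate point of the paper. You assert that the sup-norms $\|u_n-u_\infty\|_\infty$, $\|v_n-v_\infty\|_\infty$ are $<\epsilon$ for $n\geq N_2$ \emph{uniformly in $t$}. Proposition \ref{prop: hybridK3-weakconvergence-invariantmeasure-uniformestimation} does not give this: the actual bound is
\begin{align*}
    \sup_{z\in X_t,\,i}\left|G_{\pm\infty,t}(s_i(z))-G_{\pm n,t}(s_i(z))\right|<\epsilon_n\log|t|^{-1},
\end{align*}
which blows up as $t\to 0$ for fixed $n$; plain $t$-uniform convergence of the potentials is neither proved nor used in the paper. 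What rescues the estimate is a cancellation you have not accounted for: the hybrid model function restricted to the fiber $X_t$ is $\Phi_{\cF}=\frac{\log r}{\log|t|}\,\phi_{\cF}$, so after integration by parts the relevant masses are those of $dd^c\phi_{\cF}\wedge dd^c s_i^*G^{\mp}$ (these, not $dd^c\varphi\wedge\cdots$ with $\varphi=\Phi\circ\psi_t$, are the ones bounded cohomologically independently of $t$ and $n$), and the prefactor $|\log r|/\log|t|^{-1}$ exactly cancels the $\log|t|^{-1}$ growth of the potential differences, yielding a bound $\epsilon_n|\log r|\,C$ independent of $t$. As written, your bookkeeping claims both a $t$-uniform sup-norm bound and a $t$-independent mass bound for $dd^c\varphi\wedge dd^c v_n$; you cannot have both, since the factor $\log r/\log|t|$ must sit on one side or the other. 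The conclusion survives once the accounting is corrected, but the correction is the substance of the argument (the acknowledgment records that an error at exactly this estimate had to be fixed). Your closing remark about isolating a neighborhood $K$ of the singular locus is sensible but is needed only inside the proof of Proposition \ref{prop: hybridK3-weakconvergence-modelmetrics-modelfunctions} (your second term); the paper's treatment of the first term avoids it by bounding everything through cohomological intersection numbers.
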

As in the proof of Proposition \ref{prop: hybridK3-weakconvergence-modelmetrics}, it is enough to take test functions as model functions, which reduces the theorem to the following statement:
\begin{thm}\label{thm: hybridK3-weakconvergence-invariantmeasure-modelfunctions}
    Let $\cF$ be any regular admissible datum. Then, we have
    \begin{align*}
        \lim_{t\to0}\int_{X_t} \Phi_{\cF,t}d\eta_t=\int_{X_{\bC((t))}^{\an}}g_{\cF}d\eta_0.
    \end{align*}
\end{thm}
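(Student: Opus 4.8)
The plan is to combine the per-level convergence already established in Proposition \ref{prop: construction of measures - weak conv of nth measures} with the uniform-in-$t$ convergence of the potentials $G_n$ to $G_{\pm\infty}$. For a fixed regular admissible datum $\cF$, the model function $\Phi_{\cF}$ is continuous on the compact hybrid space $X^{\hyb}$, hence bounded by some constant $C>0$. The strategy is a standard $3\epsilon$-argument: decompose
\begin{align*}
    \left|\int_{X_t}\Phi_{\cF,t}\,d\eta_t-\int_{X_{\bC((t))}^{\an}}g_{\cF}\,d\eta_0\right|
    &\leq\left|\int_{X_t}\Phi_{\cF,t}\,d(\eta_t-\eta_{n,t})\right|
    +\left|\int_{X_t}\Phi_{\cF,t}\,d\eta_{n,t}-\int_{X_{\bC((t))}^{\an}}g_{\cF}\,d\eta_n^{\NA}\right| \\
    &\quad+\left|\int_{X_{\bC((t))}^{\an}}g_{\cF}\,d(\eta_n^{\NA}-\eta_0)\right|.
\end{align*}
The middle term tends to $0$ as $t\to0$ for each fixed $n$ by Proposition \ref{prop: construction of measures - weak conv of nth measures} (after pushing forward by $\psi_{t,*}$ and $\psi_{0,*}$). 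The third term is controlled purely on the non-archimedean side by Filip's construction, since $\eta_n^{\NA}\to\eta_0$ weakly and $g_{\cF}$ is continuous; it is made smaller than $\epsilon$ by choosing $n$ large.

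The crux is the first term, and this is where the uniformity in $t$ promised in Section \ref{subsec: hybridK3-weakconvergence-invariantmeasure-uniformestimation} becomes indispensable. Locally $\eta_t=dd^c(G_\infty\circ s)\wedge dd^c(G_{-\infty}\circ s)$ and $\eta_{n,t}=dd^c(G_n\circ s)\wedge dd^c(G_{-n}\circ s)$ for a non-vanishing section $s$. I would like to say that $G_{\pm n}\to G_{\pm\infty}$ uniformly on $X_t$ forces $\eta_{n,t}\to\eta_t$ in a way that is uniform in $t$. The naive route through Proposition \ref{proposition: backgrounds - continuity of MA measures} only gives weak convergence of Monge-Amp\`ere measures for each fixed $t$, with no uniform control as $t\to0$, so that alone is insufficient. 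Instead I would bound $\left|\int_{X_t}\Phi_{\cF,t}\,d(\eta_t-\eta_{n,t})\right|$ using the boundedness $|\Phi_{\cF,t}|\leq C$ together with a quantitative estimate on the total mass of $|\eta_t-\eta_{n,t}|$ that is uniform over $t\in\bar\bD_r^*$.

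The main obstacle, then, is producing that uniform mass estimate. The key tool is the telescoping identity for mixed Monge-Amp\`ere measures: writing $u_\pm:=G_{\pm\infty}\circ s$ and $v_\pm:=G_{\pm n}\circ s$, one has
\begin{align*}
    dd^c u_+\wedge dd^c u_- - dd^c v_+\wedge dd^c v_-
    = dd^c(u_+-v_+)\wedge dd^c u_- + dd^c v_+\wedge dd^c(u_--v_-),
\end{align*}
so pairing against $\Phi_{\cF,t}$ and integrating by parts twice moves the $dd^c$ onto the test function or exploits that $\int_{X_t}dd^c w\wedge \omega$ for a closed form $\omega$ is controlled by $\|w\|_{\infty}$ times the mass of $\omega$. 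Since $\|u_\pm-v_\pm\|_{L^\infty(X_t)}=\|G_{\pm\infty}-G_{\pm n}\|_{L^\infty(X_t)}\to0$ uniformly in $t$ by the estimate of Section \ref{subsec: hybridK3-weakconvergence-invariantmeasure-uniformestimation}, and the masses of $dd^c u_-$ and $dd^c v_+$ equal the $t$-independent intersection numbers computed from $L_1,\ldots,L_k$, the whole difference is bounded by (const)$\cdot\|G_{\pm\infty}-G_{\pm n}\|_\infty\to0$ uniformly. I would carry out the integration-by-parts carefully, being attentive to the fact that $\Phi_{\cF,t}$ is not smooth and that $u_\pm$ are merely continuous psh, so the manipulations must be justified through the Bedford--Taylor/Chambert-Loir--Ducros formalism rather than by classical Stokes; this technical justification, uniform across all fibers, is the delicate point. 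Fixing $n$ large to kill the first and third terms and then letting $t\to0$ to kill the middle term completes the argument.
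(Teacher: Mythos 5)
Your overall architecture is exactly the paper's: the same three-term decomposition, with the middle term handled by Proposition \ref{prop: construction of measures - weak conv of nth measures}, the third term by Filip's non-archimedean convergence $\eta_n^{\NA}\to\eta_0$, and the first term attacked by telescoping the mixed Monge-Amp\`ere difference and integrating by parts in the Bedford--Taylor sense against masses computed cohomologically, hence independently of $t$. But there is a genuine gap at precisely the point you identify as the crux. Proposition \ref{prop: hybridK3-weakconvergence-invariantmeasure-uniformestimation} does \emph{not} assert that $\|G_{\pm\infty}-G_{\pm n}\|_{L^\infty(X_t)}\to 0$ uniformly in $t$; it asserts
\begin{align*}
    \sup_{X_t}\left|G_{\pm\infty,t}-G_{\pm n,t}\right|<\epsilon_n\log|t|^{-1},
\end{align*}
and for each \emph{fixed} $n$ the right-hand side blows up as $t\to 0$. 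So your concluding bound ``(const)$\,\cdot\,\|G_{\pm\infty}-G_{\pm n}\|_\infty\to 0$ uniformly in $t$'' is false as stated, and combining the divergent potential error with the mere boundedness $|\Phi_{\cF,t}|\leq C$ cannot close the first term: you would be forced to send $n\to\infty$ and $t\to 0$ simultaneously with no quantitative coupling between them.

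What rescues the argument --- and what the paper's proof actually uses --- is the fiberwise scaling built into the hybrid model function, which your proposal discards the moment you replace $\Phi_{\cF,t}$ by the constant $C$. On the fiber $X_t$ one has $\Phi_{\cF}\circ\psi=(\log r\cdot\phi_{\cF})/\log|t|$, so the test function itself carries the decaying factor $|\log r|/\log|t|^{-1}$. Integrating by parts against $\phi_{\cF}$ (not against the rescaled $\Phi_{\cF,t}$) and then applying the telescoping identity, the paper bounds the first term by
\begin{align*}
    \frac{|\log r|}{\log|t|^{-1}}\cdot\epsilon_n\log|t|^{-1}\cdot C_1=\epsilon_n\,|\log r|\,C_1,
\end{align*}
where $C_1$ is the $t$-independent cohomological mass bound you correctly anticipated: the two factors of $\log|t|^{-1}$ cancel exactly, and only then is the estimate uniform in $t$. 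This cancellation is not a technicality but the entire content of the uniformity claimed in Section \ref{subsec: hybridK3-weakconvergence-invariantmeasure-uniformestimation}; once you restore it, your telescoping-plus-mass-bound scheme coincides with the paper's proof.
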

Here, we sketch the proof. We follow the argument by Favre in \cite{Favre:2020rz}. Since
\begin{align*}
    &\left|\int_{X_t}\Phi_{\cF}d\eta_t-\int_{X_{\bC((t))}^{\an}}g_{\cF}d\eta_0\right| \\
    &\leq\left| \int_{X_t}\Phi_{\cF}d\eta_t-\int_{X_t}\Phi_{\cF}d\eta_{n,t} \right|+\left|\int_{X_t}\Phi_{\cF}d\eta_{n,t}-\int_{X_{\bC((t))}^{\an}}g_{\cF}d\eta_{n,0}\right|+\left|\int_{X_{\bC((t))}^{\an}}g_{\cF}d\eta_{n,0}-\int_{X_{\bC((t))}^{\an}}g_{\cF}d\eta_0\right|,
\end{align*}
we divide the estimation into the above three parts, denoted by $I_1$, $I_2$, and $I_3$, each term of the right-hand side of the above inequality, respectively. Note that Proposition \ref{prop: construction of measures - weak conv of nth measures} shows $I_2\to 0$. Also, the convergence of $I_3$ follows from the weak convergence of $\eta_{n,0}$ to $\eta_0$, which is Corollary 5.6.8 of \cite{Filip:2019ij}. Hence, essentially, it is enough to estimate $I_1$, where we need uniformity on $t$.

We first show the uniform convergence of $\{G_n\}$ in Section \ref{subsec: hybridK3-weakconvergence-invariantmeasure-uniformestimation}. We complete the proof of Theorem \ref{thm: hybridK3-weakconvergence-invariantmeasure-modelfunctions} in Section \ref{subsec: hybridK3-weakconvergence-invariantmeasure-proof}.

\subsection{Estimation of potentials for archimedean parameters} \label{subsec: hybridK3-weakconvergence-invariantmeasure-uniformestimation}
We use the same notation as the previous subsection. The goal is the estimation of $\{G_n\}$ as $n\to\pm\infty$. The basic argument parallels Filip's one \cite{Filip:2019ij} in the non-archimedean case, but with a more detailed estimation to study the dependence on $t$.

We prepare several more notations. Take a Zariski open covering $\{U_{i,K}\}_{i=1}^{N}$ of $X_K$ so that $E_K$ has a non-vanishing section for each $U_{i,K}$. Then, by shrinking the parameter disk if necessary, there is an open covering $\{U_i\}_{i=1}^{N}$ of $X$ with a non-vanishing section $s_i$ of $E$ on $U_i$.

The following Proposition is the critical estimate in the proof of the main theorem. This statement corresponds to Theorem 4.2 of \cite{Favre:2020rz} in case of degeneration of rational functions, but the estimation here is more delicate.

\begin{prop}\label{prop: hybridK3-weakconvergence-invariantmeasure-uniformestimation}
    In the above notation, there exists a decreasing sequence of positive real numbers $\{\epsilon_n\}$ converging to 0 such that
    \begin{align*}
        \sup_{z\in X_t, i=0,\ldots,N}\left|G_{\pm\infty, t}(s_i(z))-G_{\pm n,t}(s_i(z))\right|<\epsilon_n\log|t|^{-1}
    \end{align*}
for any sufficiently small $t$.
\end{prop}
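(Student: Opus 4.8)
The plan is to run a telescoping argument adapted to the dynamical normalization, and to reduce the whole statement to a single uniform growth estimate that is ultimately controlled by the spectral gap of $f^*$. Write $u:=\frac1m\log(\varphi_0^*h^m)$ for the continuous function on $E$, so that $G_n=\lambda^{-n}\,u\circ F^n$, and set $\phi:=\lambda^{-1}\,u\circ F-u$. A direct computation gives the increment
\[
G_{n+1}-G_n=\lambda^{-n}\,\phi\circ F^n,
\]
so that, summing the (a priori only formal) telescoping series,
\[
G_{+\infty}-G_{+n}=\sum_{j\ge n}\lambda^{-j}\,\phi\circ F^j.
\]
Composing with $s_i$ and taking absolute values, the proposition reduces to the single bound $\bigl|\phi(F^j(s_i(z)))\bigr|\le p(j)\,\log|t|^{-1}$ for a fixed polynomial $p$, uniformly in $z\in X_t$, in $i$, and in all sufficiently small $t$: one then sets $\epsilon_n:=\sum_{j\ge n}\lambda^{-j}p(j)$, which is a decreasing null sequence precisely because $\lambda>1$. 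The case $G_{-\infty}-G_{-n}$ is symmetric, replacing $f,F,\lambda,A$ by $f^{-1},F^{-1},\lambda^{-1},A^{-1}$.

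To bound $\phi\circ F^j$ I would exploit that $F$ intertwines the $\bG_m^k$-action through the matrix $A=(a_{ij})$, while $f^*$ acts on coordinate vectors by $A^{\mathrm T}$. For $z\in X_t$ choose an index $i_j$ with $f^j(z)\in U_{i_j}$ and write $F^j(s_i(z))=c^{(j)}(z)\cdot s_{i_j}(f^j(z))$ for the resulting transition cocycle $c^{(j)}(z)\in(\bC^*)^k$. Since $\varphi_0=\prod_i s_i^{n_i}$ and $h^m$ scales by the norm, $u$ is \emph{affine along fibers}, $u(c\cdot e)=u(e)+\frac{C}{m}\langle\nu,\log|c|\rangle$ with $\nu:=(n_1,\dots,n_k)$ the coordinate vector of $[L]$; hence $\phi$ inherits the same affine behaviour with linear part governed by $w:=\lambda^{-1}A^{\mathrm T}\nu-\nu$, giving
\[
\phi\bigl(F^j(s_i(z))\bigr)=\phi\bigl(s_{i_j}(f^j(z))\bigr)+\tfrac{C}{m}\bigl\langle w,\ \log|c^{(j)}(z)|\bigr\rangle.
\]
The first term is a value of the fixed continuous function $\phi\circ s_{i_j}$ on $X$; by the continuous-extension mechanism of Lemma \ref{lmm: hybridK3-weakconvergence-modelmetrics-modelfunctions-continuousextension} it is comparable to a model function, hence $O(\log|t|^{-1})$ uniformly in $z$ and small $t$.

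The crux is the second term. The cocycle obeys a twisted recursion $\log|c^{(j)}(z)|=\sum_{l=0}^{j-1}A^{l}\beta_{j-1-l}(z)$, where each single-step vector $\beta_l(z)$ is a value of a fixed transition datum evaluated at $f^l(z)$, bounded by $C\log|t|^{-1}$ in sup norm uniformly in $l,z$ and small $t$. Pairing with $w$ and transposing,
\[
\bigl\langle w,\ \log|c^{(j)}(z)|\bigr\rangle=\sum_{l=0}^{j-1}\bigl\langle (A^{\mathrm T})^{l}w,\ \beta_{j-1-l}(z)\bigr\rangle,
\]
so everything hinges on the growth of $(A^{\mathrm T})^{l}w=\lambda^{-1}(A^{\mathrm T})^{l+1}\nu-(A^{\mathrm T})^{l}\nu$. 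Here hyperbolicity enters decisively: decomposing $\nu$ along the eigenspaces of $A^{\mathrm T}$, the $\lambda$-component cancels identically (since $\lambda^{-1}\lambda^{l+1}-\lambda^{l}=0$), the $\lambda^{-1}$-component decays, and the remaining eigenvalues lie on the unit circle, so $\|(A^{\mathrm T})^{l}w\|$ grows at most polynomially in $l$. Consequently $\bigl|\langle w,\log|c^{(j)}(z)|\rangle\bigr|\le p(j)\log|t|^{-1}$ for a polynomial $p$, which together with the first term yields the required uniform bound and closes the argument.

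I expect the main obstacle to be twofold, and both parts are genuinely delicate. First is the cancellation just described: it is exactly the one-dominant-eigenvalue structure of hyperbolic automorphisms of K3 surfaces that converts the naive $\lambda^{j}$ growth of $\log|c^{(j)}|$ into a merely polynomial contribution after pairing with $w$, and one must also argue that the unit-circle eigenvalues contribute at most polynomially (rather than exponentially). Second, and more insidious, is making every $O(\log|t|^{-1})$ estimate \emph{uniform in $z$ up to and including a neighborhood of the singular locus of $\cX_0$}, where the Fubini--Study potentials are most singular; there the continuous extension of Lemma \ref{lmm: hybridK3-weakconvergence-modelmetrics-modelfunctions-continuousextension} is only available on $\cX\setminus K$. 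I would handle this by bounding the base term $\phi\circ s_{i_j}$ and the single-step data $\beta_l$ directly by the global supremum of model functions on the compact hybrid space $X^{\hyb}$ (which is finite), thereby securing the global domination by $C\log|t|^{-1}$ rather than a worse power.
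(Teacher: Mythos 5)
Your telescoping scheme is, at bottom, the paper's own argument reorganized: the paper runs the equivalent recursion $d_{n+1}(t)\le\lambda^{-1}d_n(t)+\lambda^{-1}C_2\delta^{-n}\log|t|^{-1}$ on $d_n(t)=\sup_{i,x}|G_n(s_{i,t}(x))-G_{n-1}(s_{i,t}(x))|$, whose inhomogeneous term is exactly your pairing of single-step transition data with the increments of $\lambda^{-n}A^n\nu$, and its Lemma \ref{lmm: hybridK3-weakconvergence-invariantmeasure-uniformestimation-cohomology} (diagonalize $A$ with eigenvalues $\lambda,\lambda^{-1}$ and unit-circle $c_3,\dots,c_k$) is precisely your observation that the $\lambda$-component of $w=\lambda^{-1}A^{\mathrm{T}}\nu-\nu$ cancels while the remaining components of $(A^{\mathrm{T}})^lw$ decay or stay bounded. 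Unrolling the recursion yields exactly your $\epsilon_n=\sum_{j\ge n}\lambda^{-j}p(j)$ with $p$ linear. Your hedge that $\|(A^{\mathrm{T}})^lw\|$ might grow polynomially (Jordan blocks) is harmless but unnecessary: $f^*$ restricted to the orthogonal complement of the $\lambda^{\pm1}$-eigenplane is an isometry of a negative definite form, hence lies in a compact group and is semisimple, which is what licenses the paper's diagonalization.

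The genuine gap is in your last paragraph, i.e., exactly at the step you flag as the obstacle: the uniform bound $\|\beta_l(z)\|\le C\log|t|^{-1}$ (and likewise for $\phi\circ s_{i_j}$). You propose to obtain it from ``the global supremum of model functions on the compact hybrid space $X^{\hyb}$,'' but the transition data $\log|u_{ij,m}|$ are \emph{not} model functions: a model function $\Phi_{\cF}$ requires a \emph{vertical} fractional ideal sheaf, whereas $u_{ij,m}$ is nonvanishing only on the overlap $U_i\cap f^{-1}(U_j)$ and in general has horizontal zeros and poles along the boundary of that overlap; near such a horizontal divisor $\log|u_{ij,m}|$ is unbounded on $X_t$ even for a single fixed $t$, so compactness of $X^{\hyb}$ by itself gives nothing. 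What actually rescues the estimate — and is where the paper spends its effort — is a local geometric fact: taking \emph{two} snc models, $\cX^{(1)}$ for the source and $\cX$ for the target, so that $f$ extends to a morphism $f'$, one covers the central fiber by charts $\cU\subset\cU_i'\cap f'^{-1}(\cU_j)$; on such a chart $u_{ij,m}$ is nonvanishing off the central fiber, hence its divisor is vertical \emph{there}, giving the factorization $u_{ij,m}=x_1^{p_{1,m}}x_2^{p_{2,m}}x_3^{p_{3,m}}v_{ij,m}$ with $v_{ij,m}$ bounded away from $0$ and $\infty$ and with
\begin{align*}
    \bigl|p_{1,m}\log|x_1|+p_{2,m}\log|x_2|+p_{3,m}\log|x_3|\bigr|\le M_2\log|t|^{-1},
\end{align*}
since $t$ is comparable to $x_1^{l_1}x_2^{l_2}x_3^{l_3}$; compactness of $\bar{\cX}_r$ then makes the constants uniform, with the chart index chosen point by point along the orbit. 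This vertical-divisor observation is the one missing idea in your write-up (and is the reason the paper emphasizes needing two snc models); once it is supplied, your cocycle expansion closes and is quantitatively equivalent to the paper's proof.
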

To show Proposition \ref{prop: hybridK3-weakconvergence-invariantmeasure-uniformestimation}, we first show the following lemma:

\begin{lmm}\label{lmm: hybridK3-weakconvergence-invariantmeasure-uniformestimation-cohomology}
    In the above notation i.e., $(n_i)$ is a coordinate of $\cL$ with respect to $\cL_1,\ldots,\cL_k$ and $A=(a_{ij})$ is the representation matrix of $f^*$ with respect to the same basis,
    \begin{align*}
        \lim_{n\to\infty}\frac{1}{\lambda^n}A^n(n_i)^t=v_f,
    \end{align*}
    where $v_f$ is an eigenvector of $A$. Moreover, for each $n$ sufficiently large,
    \begin{align*}
        \left\|\frac{1}{\lambda^{n+1}}A^{n+1}(n_i)^t-\frac{1}{\lambda^n}A^n(n_i)^t\right\|<\frac{C_1}{\delta^n}
    \end{align*}
    for some $\delta>0$ and $C_1>0$, where $\|\cdot\|$ is any norm on $\bR^k$.
\end{lmm}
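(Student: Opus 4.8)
The plan is to treat this as a statement in linear algebra driven by the spectral gap that hyperbolicity provides. Regard $A$ as the matrix of $f^*$ acting on $\Pic(X_K)\otimes_{\bZ}\bR\cong\bR^k$ in the chosen basis. By the classification of automorphisms recalled in Section \ref{subsection: backgrounds - dynamics of K3 surfaces}, hyperbolicity of $f$ means that $\lambda>1$ is a simple eigenvalue of $A$, it is strictly the largest eigenvalue in absolute value, $\lambda^{-1}$ is a simple eigenvalue, and all remaining eigenvalues lie on the unit circle. In particular, decomposing over $\bC$ and collecting complex-conjugate eigenspaces, we obtain an $A$-invariant splitting $\bR^k=V_\lambda\oplus W$ defined over $\bR$, where $V_\lambda$ is the (real) $\lambda$-eigenline spanned by a vector $v_f$ and $W$ is the sum of the remaining generalized eigenspaces, with spectral radius $\rho:=\rho(A|_W)<\lambda$.

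First I would decompose the fixed vector. Write $(n_i)^t=c\,v_f+w$ with $c\in\bR$ and $w\in W$; both summands are real since the splitting is. Applying $A^n/\lambda^n$ gives
\begin{align*}
    \frac{1}{\lambda^n}A^n(n_i)^t=c\,v_f+\frac{1}{\lambda^n}A^nw.
\end{align*}
Fixing $\rho<\rho'<\lambda$, Gelfand's spectral-radius formula yields a constant $C>0$ with $\|A^n|_W\|\le C(\rho')^n$ for all $n$, hence $\|\lambda^{-n}A^nw\|\le C\|w\|(\rho'/\lambda)^n\to0$. This proves $\lambda^{-n}A^n(n_i)^t\to c\,v_f$, an eigenvector of $A$. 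To see that the limit is genuinely nonzero, I would use that the $L_i$ are ample: pairing $(n_i)^t$ with the $\lambda^{-1}$-eigenvector $\theta_-$ under the intersection form isolates $c$, because $\theta_-$ is orthogonal to $W$ (eigenvectors for eigenvalues whose product is not $1$ are orthogonal under an $f^*$-invariant form) while $v_f$ and $\theta_-$ span a hyperbolic plane, so $v_f\cdot\theta_-\neq0$; since $\theta_-$ is a nonzero nef class and $(n_i)^t$ corresponds to an ample class, the pairing is positive, whence $c\neq0$.

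For the quantitative estimate I would telescope inside $W$: the difference in the statement equals
\begin{align*}
    \frac{1}{\lambda^{n+1}}A^{n+1}w-\frac{1}{\lambda^n}A^nw=\frac{1}{\lambda^n}A^n\Bigl(\tfrac{1}{\lambda}A-I\Bigr)w,
\end{align*}
since the $v_f$-terms cancel. Setting $u:=(\lambda^{-1}A-I)w\in W$ and applying the same bound gives $\|\lambda^{-n}A^nu\|\le C\|u\|(\rho'/\lambda)^n$, so the claim holds with $\delta:=\lambda/\rho'>1$ and $C_1:=C\|u\|$. Since all norms on $\bR^k$ are equivalent, the choice of norm is immaterial. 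The only genuine input beyond elementary linear algebra is the spectral gap $\rho<\lambda$, which is exactly the hyperbolicity hypothesis; the one point demanding care is the nonvanishing of $c$, for which ampleness and the signature $(1,r-1)$ of the intersection form are used.
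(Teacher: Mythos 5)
Your proof is correct, and it arrives at the same spectral estimate as the paper, but by a genuinely more careful route. The paper's own proof diagonalizes: it picks $U\in\GL(k,\bC)$ with $UAU^{-1}=\diag[\lambda,\lambda^{-1},c_3,\ldots,c_k]$, computes $\lambda^{-n}B^nu'$ coordinatewise, and reads off the difference bound with rate $\delta=\lambda$ directly from the diagonal entries. You instead split $\bR^k=V_\lambda\oplus W$ into the $\lambda$-eigenline and the sum of the remaining \emph{generalized} eigenspaces, bound $\|A^n|_W\|\le C(\rho')^n$ by Gelfand's formula for any $\rho(A|_W)<\rho'<\lambda$, and telescope via $\lambda^{-(n+1)}A^{n+1}w-\lambda^{-n}A^nw=\lambda^{-n}A^n(\lambda^{-1}A-I)w$. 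This buys two things. First, you do not need $A$ to be diagonalizable, which the paper simply posits (``we may assume''); diagonalizability does in fact hold here, because $f^*$ preserves an integral lattice with a form of signature $(1,r-1)$ and acts on the negative definite complement of the $\lambda^{\pm1}$-plane through a compact group, but the paper never says this, so your Jordan-robust argument closes a small gap. Second, you verify that the limit $c\,v_f$ is \emph{nonzero}, i.e., genuinely an eigenvector as the lemma asserts: your pairing argument with the $\lambda^{-1}$-eigenvector $\theta_-$ is sound (invariance of the intersection form gives $\theta_-\perp W$ — note this orthogonality extends from eigenvectors to generalized eigenspaces by the standard fact that $V_\mu\perp V_\nu$ unless $\mu\nu=1$, which you implicitly need if unit-circle Jordan blocks were present — and $L\cdot\theta_-\neq0$ follows from ampleness of $L=\sum n_iL_i$ together with the signature, or from nefness of $\theta_-$ as you say). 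The paper's proof omits this nonvanishing entirely; if the first coordinate $u_1'$ of $Uu$ vanished, its limit would be $0$, not an eigenvector. The only cost of your approach is a slightly weaker rate, $\delta=\lambda/\rho'<\lambda$ instead of the paper's $\delta=\lambda$, which is immaterial: the application in Proposition \ref{prop: hybridK3-weakconvergence-invariantmeasure-uniformestimation} only uses $\delta>1$ to sum the geometric series.
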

Note that this estimation is completely independent of $t$.
\begin{proof}
    Working on $\bC$, we may assume that there exists a matrix $U\in \GL(n,\bC)$ such that
    \begin{align*}
        &B:=UAU^{-1}=\diag[\lambda,\lambda^{-1},c_3,\ldots,c_k],\text{ and }\\
        &v_f'=Uv_f=(a,0,\ldots,0)^t,
    \end{align*}
    where $c_3,\ldots c_k$ are eigenvalues of $A$ whose absolute values are all $1$ and $a\in\bC$. Denote $u=(n_i)^t$ and $u'=Uu=(u_1',\ldots,u_k')^t$. We may assume that $a=u_1'$. Then, 
    \begin{align*}
        \frac{1}{\lambda^n}B^nu'= \left(u_1',\frac{u_2'}{\lambda^{2n}},\frac{c_3u'_3}{\lambda^n},\ldots,\frac{c_ku_k'}{\lambda^n}\right)^t,
    \end{align*}
    whence
    \begin{align*}
        \left\|\frac{1}{\lambda^n}B^{n+1}u'-\frac{1}{\lambda^n}B^nu'\right\|&<\left\|\left(u_1',\frac{u_2'}{\lambda^{2(n+1)}},\frac{c_3u'_3}{\lambda^{n+1}},\ldots,\frac{c_ku_k'}{\lambda^{n+1}}\right)^t-\left(u_1',\frac{u_2'}{\lambda^{2n}},\frac{c_3u'_3}{\lambda^n},\ldots,\frac{c_ku_k'}{\lambda^n}\right)^t\right\| \\
        &=\frac{1}{\lambda^n}\left\|\left(0,-\frac{u_2'}{\lambda^{n}}\left(1-\frac{1}{\lambda^2}\right),-c_3u'_3\left(1-\frac{1}{\lambda}\right),\ldots,-c_ku_k'\left(1-\frac{1}{\lambda}\right)\right)^t\right\|,
    \end{align*}
    and we have the claim by changing the coordinate by $U$.
\end{proof}

\begin{proof}[Proof of Proposition \ref{prop: hybridK3-weakconvergence-invariantmeasure-uniformestimation}]
    It is enough to show the inequality for $G_{\infty}$, and the proof for $G_{-\infty}$ is parallel to it if we replace $f$ by $f^{-1}$. 
    Note that by putting $A^n=(a_{ij}^{(n)})$, for each $n$, $u=(u_m)\in\bC^k$ and $z\in E$ we have
    \begin{align*}
        G_n(uz)= G_n(z)+ \frac{1}{\lambda^n}\sum_{l,m} a_{ml}^{(n)} n_l \log|u_m|,
    \end{align*}
    Using this relation, we define the function $\alpha_n: (\bC^*)^k\to\bC^*$ by
    \begin{align*}
        \alpha_n((u_m)):=\prod_m u_m^{\sum_{l,m} a_{ml}^{(n)} n_l},
    \end{align*}
    so that for each $n$, we have
    \begin{align*}
        G_n(uz)=G_n(z)+\frac{1}{\lambda^n}\log\alpha_n(u).
    \end{align*}
    For $x\in U_i\cap f^{-1} U_j$,
    \begin{align*}
        Fs_i(x)=u_{ij}(x)s_j(f(x))
    \end{align*}
    for some $u_{ij}:U_i\cap f^{-1}(U_j) \to(\bC^*)^k$. 

    Now estimate $G_n$. We have
    \begin{align*}
        G_{n+1}(s_i(x))&=\frac{1}{\lambda}F^*G_n(s_i(x)) \\
        &=\frac{1}{\lambda}G_n(u_{ij}(x)s_j(f(x))) \\
        &=\frac{1}{\lambda}\left(G_n(s_j(x))+\frac{1}{\lambda^n}\log|\alpha_n(u_{ij}(f(x)))|\right) \\
        &=\frac{1}{\lambda}\left(G_n(s_j(x))+\frac{1}{\lambda^n}\sum_{l,m}a_{ml}^{(n)}n_l\log|u_{ij,m}|\right)
    \end{align*}
    where $u_{ij}=(u_{ij,1},\ldots,u_{ij,k})$.
    We estimate $u_{ij}$ around the central fiber. Note that the map $F$ is algebraic for each fixed $t$ and moves analytically on $t$. Take open coverings $\{\cU_i\}$ of $\cX$ and $\{\cU_i'\}$ of $\cX'$ so that $\cU_i\cap X=\cU_i'\cap X=U_i$. 
    
    For each point $\xi\in\cU_i'\cap f'^{-1}(\cU_j)$, there exists an open neighborhood $\cU\subset\cU_i'\cap f'^{-1}(\cU_j)$ and its local coordinate $x=(x_1,x_2,x_3)$ so that the given point $\xi$ is the origin and the central fibre is written as $x_1^{l_1}x_2^{l_2}x_3^{l_3}=0$ for $l_1,l_2,l_3\in \bZ_{\geq 0}$. Then, we can write
    \begin{align*}
        u_{ij,m}(x)=x_1^{p_{1,m}}x_2^{p_{2,m}}x_3^{p_{3,m}}v_{ij,m}(x)
    \end{align*}
    for some $p_{1,m},p_{2,m},p_{3,m}\in\bZ$ on $\cU\cap X$ so that $p_{i,m}=0$ when $l_i=0$, and $v_{ij,m}$ is holomorphic at $\xi$ and nonzero. 
    
    By taking $\cU$ small enough, hence, we may assume that for some $M_1>0$
    \begin{align*}
        M_1^{-1}<|v_{ij,m}|<M_1,
    \end{align*}
    i.e., by taking the logarithm
    \begin{align*}
        -\log M_1<\log|v_{ij,m}|<\log M_1.
    \end{align*}
    Let $M_2\in\bZ$ be so large that
    \begin{align*}
        \bigg|p_{1,m}\log|z_1|+p_{2,m}\log|z_2|+p_{3,m}\log|z_3|\bigg|\leq M_2\log|t|^{-1}.
    \end{align*}
    Then,
    \begin{align*}
        &\left|\frac{1}{\lambda^{n+1}}\log|\alpha_{n+1}(u_{ij}(f(x))|-\frac{1}{\lambda^n}\log|\alpha_n(u_{ij}(f(x))|\right|\\
        &\leq \left|\sum_{l,m}\left(\frac{1}{\lambda^{n+1}}a_{ml}^{(n+1)}-\frac{1}{\lambda^n}a_{ml}^{(n)}\right)(n_l\log|u_{ij,m}|)\right| \\
        &=\left|\sum_{l,m}\left(\frac{1}{\lambda^{n+1}}a_{ml}^{(n+1)}-\frac{1}{\lambda^n}a_{ml}^{(n)}\right)(n_l(p_{1,m}\log|z_1|+p_{2,m}\log|z_2|+p_{3,m}\log|z_3|+\log|v_{ij,m}|))\right| \\
        &\leq \sum_{l,m}\left(\frac{1}{\lambda^{n+1}}a_{ml}^{(n+1)}-\frac{1}{\lambda^n}a_{ml}^{(n)}\right)(n_l(M_2\log|t|^{-1}+\log|v_{ij,m}|)).
    \end{align*}
    Since $\log|v_{ij,m}|$ is bounded, Lemma \ref{lmm: hybridK3-weakconvergence-invariantmeasure-uniformestimation-cohomology} implies
    \begin{align*}
        \left|\frac{1}{\lambda^{n+1}}\log|\alpha_{n+1}(u_{ij}(f(x))|-\frac{1}{\lambda^n}\log|\alpha_n(u_{ij}(f(x))|\right|\leq\frac{C_2}{\delta^n}\log|t|^{-1}
    \end{align*}
    for some $C_2>0$. Put
    \begin{align*}
        d_n(t):=\sup_{\substack{i=1,\ldots,N\\ x\in U_i}}\bigg|G_n(s_{i,t}(x))-G_{n-1}(s_{i,t}(x))\bigg|.
    \end{align*}
    Then, since
    \begin{align*}
        |G_{n+1}(s_{i,t}(x))-G_n(s_{i,t}(x))|&= \frac{1}{\lambda}|G_n(F(s_{i,t}(x)))-G_{n-1}(F(s_{i,t}(x)))| \\
        &\leq \frac{1}{\lambda}|G_n(s_j(f(x)))-G_{n-1}(s_j(f(x)))|+\left|\frac{1}{\lambda^{n+1}}\log|\alpha_n(x)|-\frac{1}{\lambda^{n}}\log|\alpha_{n-1}(x)|\right| \\
        &\leq \frac{1}{\lambda}d_{n}(t)+\frac{1}{\lambda}\frac{C_2}{\delta^n}\log|t|^{-1}
    \end{align*}
    for any $i$ and $x$, we have
    \begin{align*}
        d_{n+1}(t)\leq \frac{1}{\lambda}d_n(t)+\frac{1}{\lambda}\cdot\frac{C_2}{\delta^n}\log|t|^{-1}.
    \end{align*}
    Therefore we have
    \begin{align*}
        d_n(t)&\leq \frac{1}{\lambda}\cdot\frac{1}{\delta^{n-1}}\log|t|^{-1}+\cdots\frac{1}{\lambda^{n-1}}\cdot\frac{1}{\delta}\log|t|^{-1}+\frac{1}{\lambda^{n-1}}d_1(t) \\
        &\leq \frac{(n-1)C_2}{\lambda^n}\log|t|^{-1}+\frac{1}{\lambda^{n-1}}d_1(t) \\
        &\leq \frac{(n-1)C_2}{\lambda^n}\log|t|^{-1}+\frac
        {1}{\lambda^{n-1}}\frac{C_2}{\delta}\log|t|^{-1},
    \end{align*}
    where we assume $\lambda\geq\delta$ (replace $\lambda$ by $\delta$ if not). 
    
    Since $\sum_n (n-1)/\lambda^n$ and $\sum_n 1/\lambda^n$ converges, we have
    \begin{align*}
        \sum_n d_n(t)\leq C_2\cdot\sum_{n=1}^{\infty}\left(\frac{n-1}{\lambda^n}+\frac{1}{\delta\cdot\lambda^{n-1}}\right)\log|t|^{-1}.
    \end{align*} 
    We set
    \begin{align*}
        \epsilon_n:=C_2\cdot\sum_{i=n+1}^{\infty}\left(\frac{n-1}{\lambda^n}+\frac{1}{\delta\cdot\lambda^{n-1}}\right)
    \end{align*}
    to complete the statement.
\end{proof}

\subsection{Proof of Theorem \ref{thm: hybridK3-weakconvergence-invariantmeasure-modelfunctions}}\label{subsec: hybridK3-weakconvergence-invariantmeasure-proof}
We continue to use notations in the previous subsections. Following the sketch of the proof in Section \ref{subsec: main theorem - construction of the inv measures and main theorem}, we estimate 
\begin{align*}
    \left|\int_{X_t}\Phi_{\cF}d\eta_t-\int_{X_t}\Phi_{\cF}d\eta_{n,t}\right|.
\end{align*}
Note that $\eta_t=dd^c s_i^*G$ on $W_i$ by definition and $\Phi_{\cF}=\log r\phi_{\cF}/\log|t|^{-1}$. Take a partition of unity and work on $W_i$ (or a smaller set if necessary) to get
\begin{align*}
    \int_{W_{i,t}}\phi_{\cF}d\eta_t&=\int_{W_{i,t}}\phi_{\cF}dd^c s_i^*G_{n,t}^+\wedge dd^c s_i^*G_{n,t}^- \\
    &=\int_{W_{i,t}}s_i^*G_{n,t}^+dd^c\phi_{\cF}\wedge dd^c s_i^*G_{n,t}^-,
\end{align*}
and the same for the second integral. Hence 
\begin{align*}
    &\left|\int_{W_{i,t}}\Phi_{\cF}d\eta_t-\int_{W_{i,t}}\Phi_{\cF}d\eta_{n,t}\right| \\
    &=\left|\frac{\log r}{\log|t|^{-1}}\int_{W_{i,t}}\phi_{\cF}d\eta_t-\int_{W_{i,t}}\phi_{\cF}d\eta_{n,t}\right| \\
    &= \frac{|\log r|}{\log|t|^{-1}}\left|\int_{W_{i,t}}s_i^*G_{n,t}dd^c\phi_{\cF}\wedge dd^c s_i^*G_{n,t}^- -\int_{W_{i,t}}s_i^*G_{t}^+dd^c\phi_{\cF}\wedge dd^c s_i^*G_{t}^-\right| \\
    &= \frac{|\log r|}{\log|t|^{-1}}\left|\int_{W_{i,t}}(s_i^*G_{n,t}-s_i^*G_t^+ )dd^c\phi_{\cF}\wedge dd^cs_i^*G_{n,t}^- + \int_{W_{i,t}} (s_i^*G_{n,t}^--s_i^*G_t^- )dd^c \phi_{\cF} \wedge dd^c s_i^*G_t^+ \right| \\
    &\leq \epsilon_n|\log r|\left|\int_{W_{i,t}}\left(dd^c\phi_{\cF}\wedge dd^cs_i^*G_{n,t}^-+dd^c \phi_{\cF} \wedge dd^cs_i^* G_t^+ \right)\right|,
\end{align*}
where the last inequality follows from Proposition \ref{prop: hybridK3-weakconvergence-invariantmeasure-uniformestimation}. Note that the right-hand side of the last inequality is independent of $i$, and the measures inside of the integral are all positive. Therefore,
\begin{align*}
    \left|\int_{X_t}\Phi_{\cF}d\eta_t-\int_{X_t}\Phi_{\cF}d\eta_{n,t}\right|\leq \epsilon_n|\log r|\left|\int_{X_t}\left(dd^c\phi_{\cF}\wedge dd^cs_i^*G_{n,t}^-+dd^c \phi_{\cF} \wedge dd^cs_i^* G_t^+ \right)\right|.
\end{align*}
The integral inside of the absolute value can be calculated in terms of cohomology, independent of $t$. Since $dd^c s_i^*G_{n,t}$ converges to $dd^c s_i^*G_t$, there exists a positive real number $C_1>0$ such that for all $n$ and $t$,
\begin{align*}
    0<\int_{X_t}\left(dd^c\phi_{\cF}\wedge dd^cs_i^*G_{n,t}^-+dd^c \phi_{\cF} \wedge dd^c s_i^*G_t^+ \right)<C_1.
\end{align*}
Hence we have
\begin{align*}
    \left|\int_{X_t}\Phi_{\cF}d\eta_t-\int_{X_t}\Phi_{\cF}d\eta_{n,t}\right|\leq C_2\cdot\epsilon_n
\end{align*}
for some $C_2>0$ and $\epsilon_n\to 0$ as $n\to\infty$.

Now, we complete the proof. Take any $\epsilon>0$. By the above argument, there exists a large natural number $N_1$ such that $\epsilon_n<\epsilon/3C_2$ for any $n>N_1$. Also since $\eta_{n,0}$ weakly converges to $\eta_0$, there exists $N_2$ such that
\begin{align*}
    \left|\int_{X_{\bC((t))}^{\an}}g_{\cF}d\eta_{n,0}-\int_{X_{\bC((t))}^{\an}}g_{\cF}d\eta_0\right|<\frac{\epsilon}{3}.
\end{align*}
Fix any $n>\max(N_1,N_2)$. By Theorem \ref{thm: hybridK3-weakconvergence-invariantmeasure-modelfunctions} there exists a small $\delta>0$ so that for any $t\in\bar{\bD}_{\delta}$
\begin{align*}
    \left|\int_{X_t}\Phi_{\cF}d\eta_{n,t}-\int_{X_{\bC((t))}^{\an}}g_{\cF}d\eta_{n,0}\right|<\epsilon/3.
\end{align*}
Combining together, for any $t\in\bar{\bD}_{\delta}$ we have
\begin{align*}
    &\left|\int_{X_t}\Phi_{\cF}d\eta_t-\int_{X_{\bC((t))}^{\an}}g_{\cF}d\eta_0\right| \\
    &\leq\left| \int_{X_t}\Phi_{\cF}d\eta_t-\int_{X_t}\Phi_{\cF}d\eta_{n,t} \right|+\left|\int_{X_t}\Phi_{\cF}d\eta_{n,t}-\int_{X_{\bC((t))}^{\an}}g_{\cF}d\eta_{n,0}\right|+\left|\int_{X_{\bC((t))}^{\an}}g_{\cF}d\eta_{n,0}-\int_{X_{\bC((t))}^{\an}}g_{\cF}d\eta_0\right| \\
    &\leq \epsilon.
\end{align*}

\bibliographystyle{plain}
\bibliography{hybrid_dynamics_of_K3_surfaces}

\begin{thebibliography}{10}

\bibitem{Bedford:1976qd}
Eric Bedford and B.~A. Taylor.
\newblock The dirichlet problem for a complex monge-amp{\`e}re equation.
\newblock {\em Inventiones mathematicae}, 37(1):1--44, 1976.

\bibitem{Bianchi2018DegenerationOQ}
Fabrizio Bianchi and Y{\^u}suke Okuyama.
\newblock Degeneration of quadratic polynomial endomorphisms to a henon map.
\newblock {\em Indiana University Mathematics Journal}, 2018.

\bibitem{BoucksomEriksson2021}
Sebastien Boucksom and Dennis Eriksson.
\newblock Spaces of norms, determinant of cohomology and fekete points in non-archimedean geometry.
\newblock {\em Advances in Mathematics}, 378:107501, 02 2021.

\bibitem{Sebastien:2015uz}
S{\'e}bastien Boucksom, Charles Favre, and Mattias Jonsson.
\newblock Singular semipositive metrics in non-archimedean geometry.
\newblock {\em Journal of Algebraic Geometry}, 25(1):77--139, 2015.

\bibitem{BOUCKSOM:2015fr}
S{\'e}bastien Boucksom, Charles Favre, and Mattias Jonsson.
\newblock Solution to a non-archimedean monge-amp{\`e}re equation.
\newblock {\em Journal of the American Mathematical Society}, 28(3):617--667, 2015.

\bibitem{BHJ1}
S{\'e}bastien Boucksom, Tomoyuki Hisamoto, and Mattias Jonsson.
\newblock Uniform k-stability and asymptotics of energy functionals in kahler geometry.
\newblock {\em European Journal of Mathematics}, 21(9):2905--2944, 20179.

\bibitem{Sebastien:2017le}
S{\'e}bastien Boucksom and Mattias Jonsson.
\newblock Tropical and non-archimedean limits of degenerating families of volume forms.
\newblock {\em Journal de l'{\'E}cole polytechnique ---Math{\'e}matiques}, 4:87--139, 2017.

\bibitem{Cantat:2001}
Serge Cantat.
\newblock Dynamique des automorphismes des surfaces k3.
\newblock {\em Acta Mathematica}, 187(1):1--57, 2001.

\bibitem{chambertloir2012formes}
Antoine Chambert-Loir and Antoine Ducros.
\newblock Formes diff\'erentielles r\'eelles et courants sur les espaces de berkovich, 2012.

\bibitem{Demailly:1993if}
Jean-Pierre Demailly.
\newblock {\em Monge-Amp{\`e}re Operators, Lelong Numbers and Intersection Theory}, pages 115--193.
\newblock Springer US, Boston, MA, 1993.

\bibitem{DE-MARCO:2014tn}
Laura DeMarco and Xander Faber.
\newblock Degenerations of complex dynamical systems.
\newblock {\em Forum of Mathematics, Sigma}, 2:e6, 2014.

\bibitem{Favre:2020rz}
Charles Favre.
\newblock Degeneration of endomorphisms of the complex projective space in the hybrid space.
\newblock {\em Journal of the Institute of Mathematics of Jussieu}, 19(4):1141--1183, 2020.

\bibitem{Filip:2019ij}
Simion Filip.
\newblock Tropical dynamics of area-preserving maps.
\newblock {\em Journal of Modern Dynamics}, 14(0):179--226 EP --, 2019.

\bibitem{gromov2003entropy}
Mikha{\i}l Gromov.
\newblock On the entropy of holomorphic maps.
\newblock {\em Enseign. Math}, 49(3-4):217--235, 2003.

\bibitem{Huybrechts_2016}
Daniel Huybrechts.
\newblock {\em Lectures on K3 Surfaces}.
\newblock Cambridge Studies in Advanced Mathematics. Cambridge University Press, 2016.

\bibitem{irokawa2023hybrid}
Reimi Irokawa.
\newblock Hybrid dynamics of h\'enon mappings, 2023.

\bibitem{okuyama2024degenerating}
Y{\^u}suke Okuyama.
\newblock On a degenerating limit theorem of demarco--faber, 2024.

\end{thebibliography}
\end{document}